
\documentclass{birkjour}
%
%
%
 \newtheorem{thm}{Theorem}[section]
 \newtheorem{cor}[thm]{Corollary}
 \newtheorem{lem}[thm]{Lemma}
 \newtheorem{prop}[thm]{Proposition}
 \theoremstyle{definition}
 
 \theoremstyle{remark}

 \numberwithin{equation}{section}

\begin{document}

%
%
%
%
%
%
%
%
%

\title[Reproducing kernel]
 {Reproducing kernel of the space $R^t(K,\mu)$}

\author[L. Yang]{Liming Yang}

\address{%
Department of Mathematics\\
Virginia Polytechnic Institute and State University\\
Blacksburg, VA 24061\\
USA}

\email{yliming@vt.edu}


\subjclass{Primary 47A15; Secondary 30C85, 31A15, 46E15, 47B38}

\keywords{Reproducing Kernel, Cauchy Transform, Analytic Capacity, and Analytic Bounded Point Evaluations}

\dedicatory{Dedicated to the memory of Ronald G. Douglas}

\begin{abstract}
For $1 \le t < \infty ,$ a compact subset $K$ of the complex plane $\mathbb C,$ and a finite positive measure $\mu$ supported on $K,$ $R^t(K, \mu)$ denotes the closure in $L^t (\mu )$ of rational functions with poles off $K$. Let $\Omega$ be a connected component of the set of analytic bounded point evaluations for $R^t(K, \mu)$. In this paper, we examine the behavior of the reproducing kernel of $R^t(K, \mu)$ near the boundary $\partial \Omega \cap \mathbb T$, assuming that $\mu (\partial \Omega \cap \mathbb T ) > 0$,  where $\mathbb T$ is the unit circle.
\end{abstract}

\maketitle

\section{Introduction}

Throughout this paper, let $\mathbb{D}$ denote the unit
disk $\{z: |z| < 1\}$ in the complex plane $\mathbb{C}$, let
$\mathbb{T}$ denote the unit circle $\{z: |z| =1\}$, let
$m$ denote normalized Lebesgue measure on $\mathbb{T}$. Let $\mu$ be a finite, positive Borel measure that is compactly 
supported in $\mathbb{C}$. We require that the support of $\mu$ be contained in
some compact set $K$ and we indicate this by $\mbox{spt}(\mu)~\subseteq~K$. Under these circumstances
and for $1\leq t < \infty$ and $t' = \frac{t}{t-1}$, functions in $\mathcal P$(the set of analytic polynomials) and $\mbox{Rat}(K) := \{q:\mbox{$q$ is a rational function 
with poles off $K$}\}$ are members of $L^t(\mu)$. We let $P^t(\mu)$ denote the closure of $\mathcal P$ in $L^t(\mu)$ and let $R^t(K, \mu)$ denote the closure of $\mbox{Rat}(K)$ in $L^t(\mu)$.
A point $z_0$ in $\mathbb{C}$ is called a \textit{bounded point evaluation} for $P^t(\mu)$ (resp., $R^t(K, \mu)$)
if $f\mapsto f(z_0)$ defines a bounded linear functional for functions in $\mathcal P$ (resp., $\mbox{Rat}(K)$)
with respect to the $L^t(\mu)$ norm. The norm of the bounded linear functional is denoted by $M_{z_0}$.
The collection of all such points is denoted $\mbox{bpe}(P^t(\mu))$ 
(resp., $\mbox{bpe}(R^t(K, \mu)$)).  If $z_0$ is in the interior of $\mbox{bpe}(P^t(\mu))$ (resp., $\mbox{bpe}(R^t(K, \mu)$)) 
and there exist positive constants $r$ and $M$ such that $|f(z)| \leq M\|f\|_{L^t(\mu)}$, whenever $|z - z_0|\leq r$ 
and $f\in \mathcal P$ (resp., $f\in \mbox{Rat}(K)$), then we say that $z_0$ is an 
\textit{analytic bounded point evaluation} for $P^t(\mu)$ (resp., $R^t(K, \mu)$). The collection of all such 
points is denoted $\mbox{abpe}(P^t(\mu))$ (resp., $\mbox{abpe}(R^t(K, \mu)$)). Actually, it follows from Thomson's Theorem
\cite{thomson} (or see Theorem \ref{TThomson}, below) that $\mbox{abpe}(P^t(\mu))$ is the interior of $\mbox{bpe}(P^t(\mu))$. 
This also holds in the context of $R^t(K, \mu)$ as was shown by J. Conway and N. Elias in \cite{ce93}. Now, 
$\mbox{abpe}(P^t(\mu))$ is the largest open subset of $\mathbb{C}$ to which every function in $P^t(\mu)$ has an analytic 
continuation under these point evaluation functionals, and similarly in the context of $R^t(K, \mu)$. Let $S_\mu$ denote the multiplication by $z$ on $R^t(K, \mu)$. It is well known that $R^t(K, \mu) = R^t(\sigma(S_\mu), \mu)$ and $\sigma(S_\mu)\subset K$, where $\sigma(S_\mu)$ denotes the spectrum of $S_\mu$ (see, for example, Proposition 1.1 in \cite{ce93}). Throughout this paper, we assume $K = \sigma(S_\mu)$.

Our story begins with celebrated results of J. Thomson, in \cite{thomson}. 
\medskip

\begin{thm}[Thomson (1991)]\label{TThomson} 
Let $\mu$ be a finite, positive Borel measure that is compactly supported in $\mathbb{C}$ and suppose that $1\leq t < \infty$.
There is a Borel partition $\{\Delta_i\}_{i=0}^\infty$ of $\mbox{spt}(\mu)$ such that the space $P^t(\mu |_{\Delta_i})$ 
contains no nontrivial characteristic function (i.e., 
 $P^t(\mu |_{\Delta_i})$ is irreducible) and
 \[
 \ P^t(\mu ) = L^t(\mu |_{\Delta_0})\oplus \left \{ \oplus _{i = 1}^\infty P^t(\mu |_{\Delta_i}) \right \}.
 \]
Furthermore, if $U_i :=abpe( P^t(\mu |_{\Delta_i}))$ for $i \ge 1,$ then $U_i$ is a simply connected region and $\Delta_i\subseteq \overline{U_i}$.
\end{thm}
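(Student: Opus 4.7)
The plan is to build the partition from the Boolean algebra of idempotents in $P^t(\mu)$ and then apply an analytic-capacity construction to each irreducible piece.

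First, I would consider the collection $\mathcal{E}$ of Borel sets $E \subseteq \text{spt}(\mu)$ such that $\chi_E \in P^t(\mu)$. Because multiplication by a bounded element of $P^t(\mu)$ is continuous on $L^t(\mu)$ and $\chi_E \chi_F = \chi_{E\cap F}$, the family $\mathcal{E}$ is closed under finite intersections, complements, and countable disjoint unions; hence it is a $\sigma$-algebra modulo $\mu$-null sets. Taking a maximal countable family of mutually disjoint $\mathcal{E}$-atoms yields the candidate pieces $\Delta_1, \Delta_2, \ldots$, on each of which $P^t(\mu|_{\Delta_i})$ contains no nontrivial characteristic function, together with the residual set $\Delta_0$ on which every Borel subset is in $\mathcal{E}$. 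A monotone-class argument then shows $P^t(\mu|_{\Delta_0}) = L^t(\mu|_{\Delta_0})$, and the orthogonal direct-sum decomposition follows by approximating each polynomial in $L^t(\mu)$-norm by its products with the idempotents $\chi_{\Delta_i}$ and summing.

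The heart of the proof, for each $i \ge 1$, is to show that $U_i := \text{abpe}(P^t(\mu|_{\Delta_i}))$ is a nonempty, simply connected open set and that $\Delta_i \subseteq \overline{U_i}$. Nonemptiness is the deep content: using Vitushkin-style localized Cauchy transforms and estimates on analytic capacity, one constructs a point $\lambda$ and a constant $M$ so that $|p(\lambda)| \le M\|p\|_{L^t(\mu)}$ for every polynomial $p$; the construction is driven by a carefully chosen scheme of concentric annuli whose analytic capacities are summable against the distribution of $\mu|_{\Delta_i}$, and would exploit the $F.$ and $M.$ Riesz-type consequences of irreducibility. Simple connectedness follows from irreducibility: if $U_i$ had a bounded complementary component $V$, integrating the Cauchy kernel around $\partial V$ would produce a nontrivial idempotent in $P^t(\mu|_{\Delta_i})$, a contradiction. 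Finally, $\Delta_i \subseteq \overline{U_i}$ because any mass of $\mu|_{\Delta_i}$ sitting off $\overline{U_i}$ could be split off as its own summand, again contradicting irreducibility.

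The main obstacle, and the technical core of Thomson's paper, is the very existence of an analytic bounded point evaluation in the irreducible case; everything else is organizational bookkeeping around this one deep approximation theorem built on analytic capacity.
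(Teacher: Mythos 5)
The statement you were asked about is not proved in this paper at all: Theorem~\ref{TThomson} is Thomson's 1991 result, cited from \cite{thomson} (and the present paper even remarks that the abpe part was extended to $R^t(K,\mu)$ by Conway and Elias in \cite{ce93}). There is therefore no ``paper's own proof'' to compare your attempt against; the paper simply invokes this theorem as known.

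Taken on its own, your sketch does capture the standard organizational skeleton of Thomson's argument: the decomposition via the Boolean algebra of idempotents $\chi_E$ lying in $P^t(\mu)\cap L^\infty(\mu)$, the identification of $\Delta_0$ as the atom-free piece where $P^t(\mu|_{\Delta_0})=L^t(\mu|_{\Delta_0})$, the simple-connectedness argument by manufacturing an idempotent from a bounded complementary component of $U_i$, and the observation that mass of $\mu|_{\Delta_i}$ sitting off $\overline{U_i}$ would contradict irreducibility. That framing is consistent with Thomson's paper and with Conway's exposition in \cite{conway}.

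However, your proposal explicitly concedes its central claim rather than proving it. The existence of a nonempty $\mbox{abpe}(P^t(\mu|_{\Delta_i}))$ for an irreducible piece \emph{is} Thomson's theorem; everything else, as you say yourself, is bookkeeping. Your one paragraph on it (``a carefully chosen scheme of concentric annuli whose analytic capacities are summable\dots'') is not an argument, and it also misdescribes the actual construction: Thomson's scheme is a two-coloring (``light/heavy'') of dyadic squares with buffer zones and sequences of barriers $\gamma_n,\Gamma_n$, combined with Vitushkin's localization, Davie's capacity estimate, and a Scott Brown-style extremal problem, not a scheme of concentric annuli. (The present paper, in Lemma~\ref{lemmaDSet}, does lean on exactly this square-coloring machinery, imported from \cite{y19}.) As written, your proposal reduces the theorem to a black box labeled ``the deep content,'' which is to say it defers the only nontrivial step; the honest move, and the one the paper makes, is simply to cite \cite{thomson}.
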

\medskip

We mention a remarkable result of A. Aleman, S. Richter and C. Sunberg. It's proof involves a modification of Thomson's scheme along
with results of X. Tolsa on analytic capacity. 
\medskip

\begin{thm}[Aleman, Richter and Sunberg (2009)] \label{ARSTheorem}
Suppose that $\mu$ is supported in $\overline{\mathbb D}$, 
$abpe (P^t(\mu )) = \mathbb{D}$, $P^t(\mu )$ is irreducible, and that $\mu (\mathbb{T})> 0$.
\newline
(a) If $f \in P^t(\mu )$, then the nontangential limit $f^*(\zeta )$ of f at $\zeta$ exists a.e. $\mu |_{\mathbb{T}}$ 
and $f^* = f |_{\mathbb{T}}$ as elements of $L^t(\mu |_{\mathbb{T}}).$
\newline
(b) Every nontrivial, closed invariant subspace $\mathcal{M}$ for the shift $S_{\mu}$ on $P^t(\mu )$ has index 1; that is, the dimension
of $\mathcal{M}/z\mathcal{M}$ is one.
\newline
(c) If $t > 1$, then
\[
 \ \lim_{\lambda\rightarrow z} (1 - |\lambda |^2) ^{\frac{1}{t}} M_{\lambda } = \dfrac{1}{h(z)^{\frac{1}{t}}}
 \]
nontangentially for $m$-a.a. $z \in \mathbb T$, where $\mu |_{\mathbb T} = hm$.
\end{thm}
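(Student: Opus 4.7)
The plan is to execute a refinement of J.~Thomson's iterated dyadic colouring scheme in which the mass of $\mu$ on $\mathbb T$ is tracked, using X.~Tolsa's semiadditivity of analytic capacity (together with the equivalence $\gamma\asymp\gamma_+$) to estimate in $L^t(\mu)$ the Vitushkin-type peak functions that Thomson's scheme produces. The technical core is a \emph{localisation lemma}: for $\mu|_{\mathbb T}$-a.e.\ $\zeta\in\mathbb T$ and every $\varepsilon>0$, there is a polynomial $p_\varepsilon$ satisfying $|p_\varepsilon|\le 1+\varepsilon$ on $\overline{\mathbb D}$, $|p_\varepsilon-1|<\varepsilon$ inside a truncated nontangential Stolz cone at $\zeta$, and $\|p_\varepsilon\|_{L^t(\mu|_{\mathbb T\setminus V})}<\varepsilon$ for some $\mathbb T$-neighbourhood $V$ of $\zeta$. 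I would build $p_\varepsilon$ by iterating Thomson's partition of dyadic annuli around $\zeta$ into heavy versus light squares (heaviness measured in $\mu$-mass) and gluing local Vitushkin approximants on the light squares; Tolsa's theorem is exactly what makes the resulting sum converge in $L^t(\mu)$.

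Part (a) then follows from the localisation lemma. Approximating $f\in P^t(\mu)$ in $L^t(\mu)$ by polynomials $f_n$ and multiplying by $p_\varepsilon$ reduces nontangential convergence of $f_n(\lambda)$ at $\zeta$ to the Cauchy integral of a function essentially supported near $\zeta$, to which the classical Fatou theorem applies; passing to the limit in $n$ and $\varepsilon$ identifies the nontangential limit $f^*(\zeta)$ with $f|_\mathbb T(\zeta)$ as an element of $L^t(\mu|_\mathbb T)$.

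Part (b) is a comparatively clean corollary. Every $f\in\mathcal M$ now has a boundary trace $f^*\in L^t(\mu|_\mathbb T)$, so for $f,g\in\mathcal M$ the meromorphic quotient $f/g$ in $\mathbb D$ has well-defined boundary values on $\mathbb T$. A standard inner--outer/Beurling-type argument (using irreducibility of $P^t(\mu)$ together with a $\bar\partial$-computation) then forces $\mathcal M/z\mathcal M$ to be one-dimensional.

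Part (c) is where I expect the main obstacle. The lower bound $\liminf(1-|\lambda|^2)^{1/t}M_\lambda\ge h(\zeta)^{-1/t}$ follows by testing evaluation at $\lambda$ against normalised versions of $p_\varepsilon$. For the matching upper bound I would have to produce, for $\lambda$ approaching $\zeta$ nontangentially, an $f_\lambda\in P^t(\mu)$ with $f_\lambda(\lambda)=1$ and $\|f_\lambda\|_{L^t(\mu)}^t\le(1+o(1))(1-|\lambda|^2)^{-1}h(\zeta)^{-1}$. This demands a sharpened localisation lemma in which the $L^t(\mu|_\mathbb T)$-mass placed near $\zeta$ is asymptotically $h(\zeta)$ times the corresponding $m$-mass, and this forces $\zeta$ to be a Lebesgue point of $h$ with respect to $m$; this is precisely why the conclusion of (c) is stated for $m$-a.e.\ $\zeta$ rather than $\mu|_\mathbb T$-a.e.\ $\zeta$. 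The sharpened $p_\varepsilon$ is then multiplied by the extremiser of the point-evaluation functional at $\lambda$ on the Hardy space $H^t(\mathbb D)$, which supplies the factor $(1-|\lambda|^2)^{-1/t}$.
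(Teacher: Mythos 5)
Your outline pursues a \emph{primal} strategy: build Vitushkin-style peak functions $p_\varepsilon$ via an iterated Thomson colouring scheme, with Tolsa's semiadditivity providing the $L^t(\mu)$ estimates, and then localise. The paper's route (stated here for (c), which is what it actually reproves) is \emph{dual}: take an annihilating function $G\in R^t(K,\mu)^\perp$ with $G\ne0$ $\mu$-a.e., use the generalized Plemelj formula to control $\mathcal C(G\mu)$ nontangentially, use the identity $f(\lambda)\mathcal C(G\mu)(\lambda)=\mathcal C(fG\mu)(\lambda)$ together with a weighted Cauchy-transform estimate (Lemma~\ref{CauchyTLemma}, Proposition~\ref{MProposition2}) to bound $|f(\lambda)|$ off a set of small analytic capacity, and then pass to a genuine pointwise bound at $\lambda_0$ by a colouring-scheme maximum-modulus argument (Corollary~\ref{CorollaryDSet}). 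The lower bound in the paper is disposed of in one line via Kriete--Trent. These really are different methods, and it is worth noting that the dual method is what makes the extension to $R^t(K,\mu)$ go through; the peak-function approach depends much more heavily on the geometry of the domain.

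The substantive problem is in your treatment of (c), and it is not merely a matter of rigour. You have the two directions of the inequality interchanged. Producing an $f_\lambda\in P^t(\mu)$ with $f_\lambda(\lambda)=1$ and small $L^t(\mu)$-norm gives $M_\lambda\ge 1/\|f_\lambda\|$, a \emph{lower} bound on $M_\lambda$, not the ``matching upper bound'' you claim. The genuinely hard direction, namely $\limsup (1-|\lambda|^2)^{1/t}M_\lambda\le h(\zeta)^{-1/t}$, is a uniform statement over all unit-norm $f$, and cannot be obtained by producing one clever test function; it is precisely here that the annihilating-measure/Cauchy-transform argument (or, in the original ARS proof, a dual extremal problem) is indispensable. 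In addition, the norm estimate you write down is off: for the Hardy-space model at the Lebesgue point $\zeta$, the normalised kernel $f_\lambda$ with $f_\lambda(\lambda)=1$ should satisfy $\|f_\lambda\|^t_{L^t(\mu)}\approx (1-|\lambda|^2)\,h(\zeta)$, not $(1-|\lambda|^2)^{-1}h(\zeta)^{-1}$; the quantity you wrote blows up and imposes no constraint. Finally, ``testing evaluation at $\lambda$ against normalised $p_\varepsilon$'' with a $\lambda$-independent $p_\varepsilon$ only gives a lower bound of constant order for $M_\lambda$, not one that grows like $(1-|\lambda|^2)^{-1/t}$; you would need $p_\varepsilon$ to be $\lambda$-dependent and concentrated at the right scale, which is exactly the construction you then assign to the (misnamed) ``upper bound'' step. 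The localisation lemma itself, which all three parts lean on, is asserted rather than established, and in (b) the ``inner--outer/Beurling-type argument'' is far too thin: the ARS index-one theorem is not a one-line consequence of boundary traces, and the real content there (a $\bar\partial$-estimate and a distance argument) is missing. As written, the proposal does not reach the upper bound in (c), which is the part of the theorem this paper is actually re-proving.
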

\medskip

J. Thomson's proof of the existence of bounded point evaluations for $P^t(\mu)$
uses Davie's deep estimation of analytic capacity, S. Brown's technique, and Vitushkin's localization for 
uniform rational approximation. The proof is excellent but complicated, and it does not really
lend itself to showing the existence of nontangential boundary values in the case 
that $\mbox{spt}(\mu)\subseteq\overline{\mathbb{D}}$, $P^t(\mu)$ is irreducible and $\mu(\mathbb{T}) > 0$.
X. Tolsa's remarkable results on analytic capacity opened the door for a new view of things, through the works
of \cite{acy18}, \cite{ars09}, \cite{ARS10} and \cite{b08}, etc. 

In this paper, we assume that $R^t(K,\mu )$ is irreducible and $\Omega$ is a connected region satisfying:
 \begin{eqnarray}\label{ConditionsOnK}
 \ abpe(R^t(K,\mu )) = \Omega ,~ K = \overline \Omega , ~ \Omega \subset \mathbb D,~ \mathbb T \subset \partial\Omega .
 \end{eqnarray}
It is well known that, in this case, $\mu |_{\mathbb T} << m. $
 So we assume $\mu |_{\mathbb T} = hm.$ 

For $ \delta > 0$ and $\lambda\in\mathbb C$, set $B(\lambda, \delta) = \{z: ~ |z - \lambda | < \delta\}$. 
For $0 < \sigma < 1$, let $\Gamma _\sigma (e^{i\theta})$ denote the polynomial convex hull of $\{e^{i\theta}\}$ and $B(0,\sigma)$.
Define $\Gamma ^\delta _\sigma (e^{i\theta}) = \Gamma _\sigma (e^{i\theta})\cap B(e^{i\theta}, \delta).$ In order to define a nontangential limit of a function in $R^t(K,\mu )$ at $e^{i\theta} \in \partial \Omega,$ one needs $\Gamma ^\delta _\sigma (e^{i\theta}) \subset \Omega$ for some $\delta.$ Therefore, we define the strong outer boundary of $\Omega$ as the following:
 \begin{eqnarray}\label{SOBoundary}
 \ \partial _{so, \sigma} \Omega = \{e^{i\theta} \in \partial \Omega: ~\exists 0<\delta<1,~ \Gamma _{\sigma}^\delta (e^{i\theta}) \subset \Omega \}. 
 \end{eqnarray}
It is known that $\partial _{so, \sigma} \Omega$ is a Borel set (i.e., see Lemma 4 in \cite{ot80}) and $m(\partial _{so, \sigma_1} \Omega \setminus \partial _{so, \sigma _2} \Omega ) = 0$ for $\sigma _1 \ne \sigma _2.$ Therefore, we set $\partial _{so} \Omega = \partial _{so, \frac{1}{2}} \Omega$.

The paper \cite{acy18} presents an alternate and simpler route to prove Theorem \ref{ARSTheorem} (a) and (b)  
that has extension to the context of mean rational approximation as in Theorem \ref{ACYTheorem} below. It also uses the results of X. Tolsa on analytic capacity.
\medskip
 
\begin{thm}[Akeroyd, Conway and Yang (2019)]\label{ACYTheorem}
Let $\Omega$ be a bounded connected open set satisfying \eqref{ConditionsOnK}.
Suppose that $\mu$ is a finite positive measure supported in $K$, $abpe(R^t(K,\mu )) = \Omega $, $R^t(K,\mu )$ is irreducible, $\mu |_{\mathbb T} = hm,$ and $\mu (\partial _{so} \Omega ) > 0.$ Then:
\newline
(a) If $f \in R^t(K,\mu )$ then the nontangential limit $f^*(z)$ of $f$ exists for $\mu |_{\partial _{so} \Omega}$ 
almost all z, and $f^* = f |_{\partial _{so} \Omega}$ as elements of $L^t(\mu |_{\partial _{so} \Omega}).$
\newline
(b) Every nonzero rationally invariant subspace $\mathcal M$ of $R^t(K,\mu )$ has index 1, that is, $dim(\mathcal M / (S_\mu - \lambda _0) \mathcal M) = 1,$ for $\lambda _0\in \Omega.$
\end{thm}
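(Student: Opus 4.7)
The plan is to adapt the scheme that succeeded in the polynomial setting of Theorem~\ref{ARSTheorem} to the rational setting, with Tolsa's results on analytic capacity as the principal analytic tool. The essential geometric novelty is that the set on which one can recover nontangential boundary values is $\partial_{so}\Omega$ rather than all of $\mathbb{T}$, because only at points of $\partial_{so}\Omega$ does one have a Stolz-type approach cone contained in $\Omega$.

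\textbf{Part (a).} I would start by identifying the annihilator of $R^t(K,\mu)$ in $L^{t'}(\mu)$: a function $g\in L^{t'}(\mu)$ annihilates $\mbox{Rat}(K)$ if and only if its Cauchy transform $\mathcal{C}(g\mu)(\lambda)=\int(z-\lambda)^{-1}g(z)\,d\mu(z)$ vanishes on $K^c$. Each $\lambda_0\in\Omega$ has a reproducing functional $k_{\lambda_0}\in L^{t'}(\mu)$ with $f(\lambda_0)=\int f k_{\lambda_0}\,d\mu$ for $f\in\mbox{Rat}(K)$. The core analytic step is to show that, for $m$-a.e.\ $\zeta\in\partial_{so}\Omega$, $\mathcal{C}(g\mu)$ admits nontangential boundary values from inside $\Omega$ along the cones $\Gamma^\delta_\sigma(\zeta)$, and that these boundary values encode $g$ via a Plemelj-type jump relation. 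Writing $g\mu=gh\,m+g\,(\mu|_{K\setminus\mathbb{T}})$, Tolsa's $L^2$-boundedness of the Cauchy integral, together with the comparability $\gamma\simeq\gamma_+$ and the semiadditivity of analytic capacity, allow one to discard the interior piece $\mu|_{K\setminus\mathbb{T}}$ outside a set of zero analytic capacity, reducing the boundary behaviour to that of a standard one-sided Cauchy integral of $gh$ on an arc of $\mathbb{T}$. Once this jump identity is in place, any $f\in R^t(K,\mu)$ approximated by $f_n\in\mbox{Rat}(K)$ inherits boundary values $f^*$ that must agree with $f|_{\partial_{so}\Omega}$ in $L^t(\mu|_{\partial_{so}\Omega})$: otherwise a suitably chosen annihilator $g$ supported on the disagreement set would pair nontrivially with $f-f^*$, contradicting that $g$ annihilates $\mbox{Rat}(K)$.

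\textbf{Part (b).} For the index~1 statement, the quotient $\mathcal{M}/(S_\mu-\lambda_0)\mathcal{M}$ receives the evaluation map $f\mapsto f(\lambda_0)$, well-defined because $(z-\lambda_0)g$ vanishes at $\lambda_0$ in the analytic extension on $\Omega$. It therefore suffices to show that every $f\in\mathcal{M}$ vanishing at $\lambda_0$ already lies in $(S_\mu-\lambda_0)\mathcal{M}$, i.e.\ that the candidate quotient $g=f/(z-\lambda_0)\in R^t(K,\mu)$ belongs to $\mathcal{M}$. This is where (a) enters: the boundary trace of $g$ on $\partial_{so}\Omega$ equals $f^*/(z-\lambda_0)$, and $\mu(\partial_{so}\Omega)>0$ combined with the boundary identity allows one to produce a sequence inside $\mathcal{M}$ converging to $g$ in $L^t(\mu)$, via a truncated Cauchy-integral mollification against $\mu|_{\partial_{so}\Omega}$. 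This mirrors the polynomial index-one argument of Aleman--Richter--Sundberg \cite{ars09}.

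\textbf{Main obstacle.} I expect the hardest step to be the Cauchy transform boundary analysis at points of $\partial_{so}\Omega$: one must control $\mathcal{C}(g\mu)$ inside the approach cones uniformly, even though $\Omega$ need not resemble a disk near $\zeta$ and the strong outer boundary can be topologically intricate. Tolsa's machinery is precisely what permits the interior portion of $g\mu$ to be treated as a negligible perturbation of a one-sided Cauchy integral on an arc of $\mathbb{T}$, so that classical Fatou theorems apply and yield the desired boundary identification.
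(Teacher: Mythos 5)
The paper does not supply a proof of this statement: immediately after it, the author writes that Theorem~\ref{ACYTheorem} is a direct application of Theorem~3.6 in \cite{acy18} (a generalized Plemelj formula for a compactly supported complex measure) and moves on. So there is no internal proof here to compare with; the result is cited, with attribution to Akeroyd--Conway--Yang. At a high level your sketch is consistent with the route in \cite{acy18}, and with the way the present paper actually invokes that Theorem~3.6 inside the proof of the Main Theorem: fix an annihilator $G\in R^t(K,\mu)^\perp$ with $G\ne 0$ $\mu$-a.e., establish nontangential limits for $\mathcal{C}(G\mu)$ from inside $\Omega$ and from the reflected side at $m$-a.e.\ $e^{i\theta}\in\partial_{so}\Omega$, and read off the jump $e^{-i\theta}G(e^{i\theta})h(e^{i\theta})$; Tolsa's $\gamma\simeq\gamma_+$, semiadditivity, and the weak-$(1,1)$ bound for $\mathcal{C}_*$ are indeed the engines.

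Two points in your sketch should nevertheless be corrected. First, the interior piece $G\,\mu|_{\mathbb D}$ is not ``discarded outside a set of zero analytic capacity.'' Its Cauchy transform contributes nontrivially to both one-sided nontangential limits; the content of the Plemelj formula is that these two contributions coincide at $m$-a.e.\ $e^{i\theta}$, so they cancel in the difference of the limits, isolating the jump coming from the $hm$-part. This is exactly what the two displayed inequalities at the start of the Main Theorem's proof encode: $\mathcal{C}(G\mu)(\lambda)$ and $\mathcal{C}(G\mu)(1/\bar\lambda)$ each differ from the common principal value $\mathcal{C}(G\mu)(e^{i\theta})$ by $\pm\frac{1}{2}e^{-i\theta}G(e^{i\theta})h(e^{i\theta})$, up to an error $b/2$. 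Second, in (b) your description is vague at precisely the hard step: showing that the candidate quotient $f/(z-\lambda_0)$ lies in $\mathcal{M}$, not merely in $R^t(K,\mu)$. The phrase ``a truncated Cauchy-integral mollification against $\mu|_{\partial_{so}\Omega}$'' does not by itself accomplish this; the ARS/ACY argument uses part (a) in an essential and specific way to rule out index $\ge 2$, and as written your sketch leaves that mechanism unexplained.
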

\medskip

Theorem \ref{ACYTheorem} is a direct application of Theorem 3.6 in \cite{acy18}, which proves a generalized Plemelj's formula for a compactly supported finite complex-valued measure. In fact, the generalized Plemelj's formula holds for rectifiable curve (other than $\mathbb T$), so Theorem \ref{ACYTheorem} is valid if $\partial \Omega$ is a certain rectifiable curve. 

In this paper, we continue the work of section 3 in \cite{acy18} to generalize Theorem \ref{ARSTheorem} (c). We refine the estimates of Cauchy transform of a finite measure in \cite{acy18} and provide an alternate proof of Theorem \ref{ARSTheorem} (c) that can extend the result to the context of certain mean rational approximation space $R^t(K, \mu)$. 

By Riesz representation theorem, there exists $k_\lambda\in L^{t'}(\mu)$ for $\lambda\in abpe(R^t(K,\mu ))$ such that $M_\lambda = \|k_\lambda \|_{L^{t'}(\mu)}$ and
 \[
 \ f(\lambda) = \int f(z) \bar k_\lambda (z) d\mu(z),~ f \in \mbox{Rat}(K).
 \]
The function $k_\lambda$ is called the reproducing kernel for $R^t(K,\mu )$. 
\medskip
 
\begin{thm}[Main Theorem]\label{MTheorem}
Let $\Omega$ be a bounded connected open set satisfying \eqref{ConditionsOnK}.
Suppose that $\mu$ is a finite positive measure supported in $K$, $abpe(R^t(K,\mu )) = \Omega $, $R^t(K,\mu )$ is irreducible, $\mu |_{\mathbb T} = hm,$ and $\mu (\partial _{so} \Omega ) > 0.$ If $t > 1,$ then 
 \[
 \ \lim_{ \Gamma _{\frac{1}{4}}(e^{i\theta})\ni\lambda \rightarrow e^{i\theta}} (1 - |\lambda |^2)^{\frac{1}{t}} M_\lambda = \lim_{ \Gamma _{\frac{1}{4}}(e^{i\theta})\ni\lambda \rightarrow e^{i\theta}} (1 - |\lambda |^2)^{\frac{1}{t}} \|k_\lambda \|_{L^{t'}(\mu)} = \dfrac{1}{h(e^{i\theta})^{\frac{1}{t}}}
 \]
for $\mu$-almost all $e^{i\theta}\in \partial _{so} \Omega.$
\end{thm}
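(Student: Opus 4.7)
My plan is to prove the theorem by establishing matching lower and upper bounds for $(1-|\lambda|^2)^{1/t}M_\lambda$ as $\lambda$ approaches $e^{i\theta}$ inside $\Gamma_{1/4}(e^{i\theta})$, valid at $\mu$-a.e.\ $e^{i\theta}\in\partial_{so}\Omega$. Both halves leverage the Cauchy-transform framework used for Theorem \ref{ACYTheorem} in \cite{acy18}, but the upper bound requires a genuinely new quantitative refinement of those estimates.

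For the lower bound I would use the explicit near-extremal test function $f_\lambda(z):=(1-\bar\lambda z)^{-2/t}$. Because $K\subset\overline{\mathbb D}$, the factor $1-\bar\lambda z$ is nonzero on $K$, so any complex power of it can be uniformly approximated on $K$ by $\mathrm{Rat}(K)$ functions and hence $f_\lambda\in R^t(K,\mu)$. A direct computation gives $|f_\lambda(\lambda)|^t=(1-|\lambda|^2)^{-2}$, and since $|1-\bar\lambda z|=|z-\lambda|$ on $\mathbb T$,
\[
\int_{\mathbb T}|f_\lambda|^t\,d\mu\big|_{\mathbb T}\;=\;\int_{\mathbb T}\frac{h(z)}{|z-\lambda|^2}\,dm(z)\;=\;\frac{1}{1-|\lambda|^2}\int_{\mathbb T}P_\lambda(z)\,h(z)\,dm(z),
\]
which tends to $h(e^{i\theta})/(1-|\lambda|^2)$ nontangentially at Lebesgue points of $h$. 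Using the comparison $|1-\bar\lambda z|\gtrsim\max(1-|\lambda|,|z-e^{i\theta}|)$ valid inside $\Gamma_{1/4}(e^{i\theta})$, combined with Lebesgue differentiation of $\mu|_{K\setminus\mathbb T}$ near $\mathbb T$, the interior contribution $(1-|\lambda|^2)\int_{K\setminus\mathbb T}|f_\lambda|^t\,d\mu$ tends to $0$ at $\mu$-a.e.\ $e^{i\theta}\in\partial_{so}\Omega$. Plugging into $M_\lambda\ge|f_\lambda(\lambda)|/\|f_\lambda\|_{L^t(\mu)}$ delivers $\liminf(1-|\lambda|^2)^{1/t}M_\lambda\ge h(e^{i\theta})^{-1/t}$.

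For the upper bound I would analyze $k_\lambda$ through its Cauchy transform. Applying the reproducing property to the $\mathrm{Rat}(K)$ function $w\mapsto(z-w)^{-1}-(z-\lambda)^{-1}$, which vanishes at $w=\lambda$, yields
\[
\mathcal C(\overline{k_\lambda}\mu)(z)\;:=\;\int\frac{\overline{k_\lambda(w)}}{z-w}\,d\mu(w)\;=\;\frac{1}{z-\lambda}\qquad(z\in\mathbb C\setminus K),
\]
so the outer nontangential boundary value at $e^{i\theta}\in\mathbb T$ is $(e^{i\theta}-\lambda)^{-1}$. The generalized Plemelj formula from Theorem~3.6 of \cite{acy18} relates the jump across $\mathbb T$ to $\overline{k_\lambda(e^{i\theta})}\,h(e^{i\theta})$. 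My plan is to sharpen these estimates so as to prove, at $\mu$-a.e.\ $e^{i\theta}\in\partial_{so}\Omega$, that the inner boundary value $\mathcal C_-(\overline{k_\lambda}\mu)(e^{i\theta})$ is of lower order than $(1-|\lambda|)^{-1}$ and that $\int_{K\setminus\mathbb T}|k_\lambda|^{t'}\,d\mu$ is of lower order than $\int_{\mathbb T}|k_\lambda|^{t'}\,d\mu$. Granting these, the boundary behavior of $\overline{k_\lambda(w)}\,h(w)$ is dictated by the Plemelj jump, and $\|k_\lambda\|_{L^{t'}(\mu)}^{t'}$ reduces to a Poisson-type integral over $\mathbb T$ which, after the change of variables $u=(\arg w-\theta)/(1-|\lambda|)$ and the use of a Lebesgue point of $h$, evaluates with the same universal constant produced by the lower-bound test function. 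Matching these constants gives $\limsup(1-|\lambda|^2)^{1/t}M_\lambda\le h(e^{i\theta})^{-1/t}$ and completes the proof.

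The main obstacle will be precisely this refinement of the Cauchy-transform machinery. Both the inner boundary value $\mathcal C_-(\overline{k_\lambda}\mu)$ and the interior mass $\int_{K\setminus\mathbb T}|k_\lambda|^{t'}d\mu$ are integrals against a density $\overline{k_\lambda}$ whose $L^{t'}(\mu)$-norm itself blows up as $\lambda\to\mathbb T$, so the classical $L^{t'}$-boundedness of the Cauchy transform, which only provides nontangential existence of boundary values, is far too weak. Pushing through will require combining Tolsa's analytic-capacity estimates with the strong-outer-boundary geometry encoded in \eqref{SOBoundary} inside $\Gamma_{1/4}(e^{i\theta})$, producing pointwise quantitative control off a $\mu$-null exceptional set. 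This is precisely the refinement of \cite{acy18} advertised in the introduction, and all the real work of the paper should go into it.
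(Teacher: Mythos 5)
Your lower bound sketch is fine in outline and is essentially what the paper does (the paper just cites Lemma~1 of \cite{kt77} for the test function $(1-\bar\lambda_0 z)^{-2/t}$; your density claim for $\mu|_{K\setminus\mathbb T}$ near the boundary can be justified via the zero-density statement underlying Lemma~3.5 of \cite{acy18}, as the paper does in a slightly different place). It is the upper bound where your plan has a genuine gap, and where your route diverges fundamentally from the paper.

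You propose to apply a Plemelj-type formula to the measure $\overline{k_\lambda}\mu$ and then to show that the interior mass $\int_{K\setminus\mathbb T}|k_\lambda|^{t'}\,d\mu$ and the inner boundary value $\mathcal C_-(\overline{k_\lambda}\mu)$ are of lower order. But $\overline{k_\lambda}\mu$ is a \emph{family} of measures indexed by $\lambda$, whose densities blow up as $\lambda\to\mathbb T$, and both of your ``lower order'' claims are precisely the kind of mass-distribution control on $k_\lambda$ that the theorem itself is trying to produce. Theorem~3.6 of \cite{acy18} applies to a single fixed measure and yields an exceptional set depending on that measure; applied to $\overline{k_\lambda}\mu$ the exceptional set (and all constants) would depend on $\lambda$, and you have offered no mechanism to make this uniform as $\lambda\to e^{i\theta}$. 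As written, the upper bound reduces to two unproved claims that are at least as hard as the theorem. The paper sidesteps $k_\lambda$ entirely: it fixes a single annihilating function $G\in R^t(K,\mu)^\perp$ with $G\neq 0$ $\mu$-a.e.\ (Lemma~VII.1.7 of \cite{conway}), exploits the identity $f(\lambda)\,\mathcal C(G\mu)(\lambda)=\mathcal C(fG\mu)(\lambda)$ valid for $f\in\mathrm{Rat}(K)$, estimates the numerator uniformly over $\|f\|_{L^t(\mu)}\le 1$ via a weighted Cauchy-transform estimate (Lemma~\ref{CauchyTLemma} and Proposition~\ref{MProposition2}, with the crucial feature that the analytic-capacity bound on the $f$-dependent exceptional set $E_\delta^f$ is \emph{independent} of $f$), controls the denominator $\mathcal C(G\mu)(\lambda)$ by the Plemelj formula applied once to the fixed $G$, and then converts the resulting bound on $B(\lambda_0,\beta\delta)\setminus(E_\delta\cup E_\delta^f)$ into a bound at $\lambda_0$ itself via an analytic-capacity maximum principle (Corollary~\ref{CorollaryDSet}, which rests on Thomson's coloring scheme as in Lemma~\ref{lemmaDSet}). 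That last step is what makes the whole thing work: your approach has no analogue of it and no way to produce $\lambda$-uniform control on the reproducing kernel itself.
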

\bigskip

\section{Proof of Main Theorem}

Let $\nu$ be a finite complex-valued Borel measure that
is compactly supported in $\mathbb {C}$. 
For $\epsilon > 0,$ $\mathcal C_\epsilon(\nu)$ is defined by
\begin{eqnarray}\label{CTEDefinition}
\ \mathcal C_\epsilon(\nu)(z) = \int _{|w-z| > \epsilon}\dfrac{1}{w - z} d\nu (w).
\end{eqnarray} 
The (principal value) Cauchy transform
of $\nu$ is defined by
\begin{eqnarray}\label{CTDefinition}
\ \mathcal C(\nu)(z) = \lim_{\epsilon \rightarrow 0} \mathcal C_\epsilon(\nu)(z)
\end{eqnarray}
for all $z\in\mathbb{C}$ for which the limit exists.
If $\lambda \in \mathbb{C}$ and $\int \frac{d|\nu |}{|z - \lambda |} < \infty$, then 
$\lim_{r\rightarrow 0}\frac{|\nu |(B(\lambda, r))}{r} = 0$ and 
$\lim_{\epsilon \rightarrow 0} \mathcal C_{\epsilon}(\nu)(\lambda )$ exists. Therefore, a standard application of Fubini's
Theorem shows that $\mathcal C(\nu) \in L^s_{\mbox{loc}}(\mathbb{C})$, for $ 0 < s < 2$. In particular, it is
defined for almost all $z$ with respect to area measure on $\mathbb{\mathbb{C}}$, and clearly $\mathcal{C}(\nu)$ is analytic 
in $\mathbb{C}_\infty \setminus\mbox{spt}(\nu)$, where $\mathbb{C}_\infty := \mathbb{C} \cup \{\infty \}.$ In fact, from 
Corollary 3.1 in \cite{acy18}, we see that \eqref{CTDefinition} is defined for all $z$ except for a set of zero analytic 
capacity. Thoughout this section, the Cauchy transform of a measure always means the principal value of the transform.

The maximal Cauchy transform is defined by
 \[
 \ \mathcal C_*(\nu)(z) = \sup _{\epsilon > 0}| \mathcal C_\epsilon(\nu)(z) |.
 \]

If $K \subset\mathbb{C}$ is a compact subset, then  we
define the analytic capacity of $K$ by
\[
\ \gamma(K) = \sup |f'(\infty)|,
\]
where the supremum is taken over all those functions $f$ that are analytic in $\mathbb C_{\infty} \setminus K$ such that
$|f(z)| \le 1$ for all $z \in \mathbb{C}_\infty \setminus K$; and
$f'(\infty) := \lim _{z \rightarrow \infty} z(f(z) - f(\infty)).$
The analytic capacity of a general subset $E$ of $\mathbb{C}$ is given by: 
\[
\ \gamma (E) = \sup \{\gamma (K) : K\stackrel{\mbox{\tiny{$\subset\subset$}}}{} E\}.
\]
Good sources for basic information about analytic
capacity are Chapter VIII of \cite{gamelin}, Chapter V of \cite{conway}, and \cite{Tol14}.

A related capacity, $\gamma _+,$ is defined for subsets $E$ of $\mathbb{C}$ by:
\[
\ \gamma_+(E) = \sup \|\mu \|,
\]
where the supremum is taken over positive measures $\mu$ with compact support
contained in $E$ for which $\|\mathcal{C}(\mu) \|_{L^\infty (\mathbb{C})} \le 1.$ 
Since $\mathcal C\mu$ is analytic in $\mathbb{C}_\infty \setminus \mbox{spt}(\mu)$ and $(\mathcal{C}(\mu)'(\infty) = \|\mu \|$, 
we have:
\[
\ \gamma _+(E) \le \gamma (E)
\]
for all subsets $E$ of $\mathbb{C}$.  X. Tolsa has established the following astounding results.

\begin{thm}[Tolsa (2003)]\label{TTolsa}

(1) $\gamma_+$ and $\gamma$ are actually equivalent. 
That is, there is an absolute constant $A_T$ such that 
\begin{eqnarray}\label{GammaEq}
\ \gamma (E) \le A_ T \gamma_+(E)
\end{eqnarray}
for all $E \subset \mathbb{C}.$ 

(2) Semiadditivity of analytic capacity:
\begin{eqnarray}\label{Semiadditive}
\ \gamma \left (\bigcup_{i = 1}^m E_i \right ) \le A_T \sum_{i=1}^m \gamma(E_i)
\end{eqnarray}
where $E_1,E_2,...,E_m \subset \mathbb{C}.$

(3) There is an absolute positive constant $C_T$ such that, for any $a > 0$, we have:  
\begin{eqnarray}\label{WeakOneOne}
\ \gamma(\{\mathcal{C}_*(\nu)  \geq a\}) \le \dfrac{C_T}{a} \|\nu \|.
\end{eqnarray}
\end{thm}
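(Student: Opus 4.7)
\medskip

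The plan is to squeeze $(1 - |\lambda|^2)^{1/t} M_\lambda$ between matching asymptotic bounds as $\lambda \to e^{i\theta}$ inside $\Gamma_{1/4}(e^{i\theta})$, valid for $\mu$-a.e.\ $e^{i\theta} \in \partial_{so} \Omega$. The starting point is the duality
\[
M_\lambda = \sup\{|f(\lambda)| : f \in \mbox{Rat}(K),\ \|f\|_{L^t(\mu)} \le 1\} = \|k_\lambda\|_{L^{t'}(\mu)},
\]
so the two one-sided bounds translate into a test-function estimate (upper bound) and an explicit extremal construction (lower bound). I would work at a base point $e^{i\theta}$ that is simultaneously a Lebesgue point of $h$ and of $h^{-t'/t}$ with respect to $m$, and a point at which Theorem~\ref{ACYTheorem}(a) produces a nontangential boundary value $f^*$ for every $f \in R^t(K,\mu)$; these conditions hold $\mu$-a.e.\ on $\partial_{so}\Omega$.

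For the upper bound, I would take an arbitrary $f \in \mbox{Rat}(K)$ and use the Cauchy/Plemelj analysis from Section~3 of~\cite{acy18}, refined here, to write
\[
f(\lambda) = \int_{\mathbb T}\frac{f^*(\zeta)\,\zeta}{\zeta - \lambda}\,dm(\zeta) + E_f(\lambda),
\]
where $E_f(\lambda)$ gathers the contribution of $\mu|_{K \setminus \mathbb T}$. Splitting $f^* = (f^* h^{1/t})\cdot h^{-1/t}$ and applying H\"older gives
\[
\Bigl|\int_{\mathbb T}\frac{f^*\,\zeta}{\zeta - \lambda}\,dm\Bigr| \le \|f^*\|_{L^t(h\,dm)}\Bigl(\int_{\mathbb T}\frac{h^{-t'/t}}{|1-\bar\lambda\zeta|^{t'}}\,dm\Bigr)^{1/t'}.
\]
Because $(1-|\lambda|^2)^{t'-1}|1 - \bar\lambda\zeta|^{-t'}$ is a suitably normalized approximate identity on $\mathbb T$, Lebesgue differentiation at $e^{i\theta}$ makes the second factor $\sim h(e^{i\theta})^{-1/t}(1-|\lambda|^2)^{-1/t}$. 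Combined with $\|f^*\|_{L^t(h\,dm)} \le \|f\|_{L^t(\mu)}$, the desired upper bound $\limsup (1-|\lambda|^2)^{1/t} M_\lambda \le h(e^{i\theta})^{-1/t}$ follows once I prove $(1-|\lambda|^2)^{1/t}|E_f(\lambda)| = o(\|f\|_{L^t(\mu)})$ uniformly in $f$.

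For the lower bound, I would exhibit a concrete near-extremizer: take $F_\lambda(z) = (1-\bar\lambda z)^{-2/t}$, which is analytic on a neighborhood of $\overline{\mathbb D} \supseteq K$ and hence lies in $R^t(K,\mu)$ by Runge-type approximation. Compute directly $F_\lambda(\lambda) = (1-|\lambda|^2)^{-2/t}$, and
\[
\|F_\lambda\|_{L^t(\mu)}^t = \int_{\mathbb T}\frac{h\,dm}{|1-\bar\lambda\zeta|^2} + \int_{K \setminus \mathbb T}\frac{d\mu(z)}{|1-\bar\lambda z|^2}.
\]
Poisson--Lebesgue differentiation makes the first integral $h(e^{i\theta})(1-|\lambda|^2)^{-1}(1+o(1))$, while the second must be shown to be $o((1-|\lambda|^2)^{-1})$ along approach through $\Gamma_{1/4}(e^{i\theta})$. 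Together these give $\liminf (1-|\lambda|^2)^{1/t} M_\lambda \ge h(e^{i\theta})^{-1/t}$, which matches the upper bound and also pins down the Cauchy-extremal value of $\|k_\lambda\|_{L^{t'}(\mu)}$.

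The central obstacle in both halves is suppressing the interior contribution --- $E_f(\lambda)$ on the upper side and the integral over $K \setminus \mathbb T$ on the lower side --- uniformly as $\lambda$ tends nontangentially to $e^{i\theta}$. A priori $\mu$ may carry substantial mass in $K \setminus \mathbb T$ arbitrarily close to $\mathbb T$, which could spoil both asymptotics pointwise. The mechanism for suppressing it is Tolsa's weak-$(1,1)$ bound \eqref{WeakOneOne} applied to the Cauchy transforms of $\mu|_{K \setminus \mathbb T}$ and of $\bar k_\lambda \mu$, combined with the semiadditivity~\eqref{Semiadditive} of $\gamma$: outside a set of $e^{i\theta}$ of zero analytic capacity (and hence of zero $\mu$-measure on $\partial_{so}\Omega$, since $\mu|_{\mathbb T}\ll m$), the relevant interior Cauchy transforms vanish nontangentially and the remainders are genuinely of lower order. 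Making this quantitative, and uniform over the unit ball of $R^t(K,\mu)$, is the technical heart of the argument and is precisely the refinement of the~\cite{acy18} Cauchy-transform estimates promised in the introduction.
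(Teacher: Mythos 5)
Your proposal does not address the statement it was supposed to prove. The statement in question is Theorem~\ref{TTolsa}, Tolsa's (2003) theorem on analytic capacity: the comparability $\gamma \le A_T\,\gamma_+$, the semiadditivity of $\gamma$, and the weak-$(1,1)$ estimate for the maximal Cauchy transform. These are deep external results, and the paper offers no proof of them --- it simply cites Tolsa's papers~\cite{Tol03} and~\cite{Tol02} (and the exposition in~\cite{Tol14}). What you have written instead is a sketch of the \emph{Main Theorem}~\ref{MTheorem} about the boundary asymptotics of $(1-|\lambda|^2)^{1/t} M_\lambda$, which is a different statement that \emph{uses} Theorem~\ref{TTolsa} as an input. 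The two-sided squeeze, the test function $F_\lambda(z) = (1-\bar\lambda z)^{-2/t}$, the Poisson--Lebesgue differentiation, and the control of the interior remainder via Tolsa's weak-$(1,1)$ bound are all recognizable ingredients of the Main Theorem's proof, but none of it has any bearing on proving the claims about $\gamma$, $\gamma_+$, and $\mathcal C_*$ themselves.

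If you did intend to prove Theorem~\ref{TTolsa}, the honest answer is that it cannot be done within the scope of this paper: parts (1) and (2) are the celebrated resolution of Painlev\'e's problem and the semiadditivity conjecture, relying on $T(b)$-type theorems for nondoubling measures and curvature-of-measure techniques, and part (3) is a Cotlar-type inequality for the Cauchy transform proved via the same nonhomogeneous Calder\'on--Zygmund machinery. The appropriate move, consistent with the paper, is simply to cite~\cite{Tol03} for (1)--(2) and~\cite{Tol02} (or Proposition~4.16 of~\cite{Tol14}) for (3).
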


\begin{proof}
(1) and (2) are from \cite{Tol03} (also see Theorem 6.1 and Corollary 6.3 in \cite{Tol14}).

(3) follows from Proposition 2.1 of \cite{Tol02} (also see \cite{Tol14} Proposition 4.16).
\end{proof}
\medskip

The following lemma is a modification of Lemma 3.2 of \cite{acy18}. 

\begin{lem}\label{CauchyTLemma} 
Let $\nu$ be a finite  measure supported in $\bar {\mathbb D}$ and $| \nu | (\mathbb T ) = 0.$ Let $1 < p \le \infty ,$ $q = \frac{p}{p-1},$ $f \in C (\bar {\mathbb D}),$ and $g \in L^{q} (| \nu |).$ Assume that for some $e^{i\theta} \in \mathbb T$ we have:
\begin{eqnarray}\label{zerodensity}
\ \lim_{r\rightarrow 0} \dfrac{\int _{B(e^{i\theta}, r)}|g|^qd| \nu |}{r } = 0
\end{eqnarray}
Then, for any $a > 0$, there exists $\delta_a$, $0 < \delta_a < \frac{1}{4}$, such that whenever $0 < \delta < \delta_a$,  
there is a subset $E_{\delta}^f$ of $B(e^{i\theta}, \delta)$ 
and $\epsilon (\delta ) > 0$ satisfying: 
 \ \begin{eqnarray}\label{CT1}
 \ \lim _{\delta \rightarrow 0} \epsilon(\delta ) = 0,
 \ \end{eqnarray} 
 \ \begin{eqnarray}\label{CT2}
 \ \gamma(E_\delta ^f) <\epsilon (\delta ) \delta ,
 \ \end{eqnarray}
for all $\lambda\in B (e^{i\theta}, \delta ) \setminus E_\delta^f,$ $|\lambda _0 -  e^{i\theta} | = \frac{\delta}{2} $ and $\lambda _0\in \Gamma _\frac{1}{2}(e^{i\theta })$,
 \begin{eqnarray}\label{CT30}
 \ \lim_{\epsilon \rightarrow 0}\mathcal{C} _{\epsilon}\left ((1 - \bar \lambda _0 z)^{\frac{2}{p}}\delta^{-\frac{1}{p}}fg\nu \right )(\lambda)
 \end{eqnarray} 
exists,  and 
\ \begin{eqnarray}\label{CT3}
\ \begin{aligned}
\ & \left  |\mathcal C\left ((1 - \bar \lambda _0 z)^{\frac{2}{p}}\delta^{-\frac{1}{p}}fg\nu \right )(\lambda) - \mathcal C\left ((1 - \bar \lambda _0 z)^{\frac{2}{p}}\delta^{-\frac{1}{p}}fg\nu \right )(\frac{1}{\bar{\lambda} _0}) \right |\\
\ \le & a\|f\|_{L^{p} (| \nu |)}.
\ \end{aligned}
\ \end{eqnarray}
Notice that the set $E_\delta ^f$ depends on $f$ and all other parameters are independent of $f.$
\end{lem}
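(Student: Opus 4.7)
The plan is to reduce the Cauchy-transform difference to a single Cauchy-type integral via the resolvent identity, split the integrand into a near and a far piece around $e^{i\theta}$, and then control the near piece by Tolsa's weak $(1,1)$ estimate \eqref{WeakOneOne} (the density hypothesis \eqref{zerodensity} providing the needed small mass) and the far piece by a direct H\"older--dyadic bound. Using $w-1/\bar\lambda_0=-(1-\bar\lambda_0 w)/\bar\lambda_0$ one has
\[
\frac{1}{w-\lambda}-\frac{1}{w-1/\bar\lambda_0}=\frac{1-\bar\lambda_0\lambda}{(w-\lambda)(1-\bar\lambda_0 w)},
\]
so the difference in \eqref{CT3} equals
\[
\delta^{-1/p}(1-\bar\lambda_0\lambda)\int\frac{(1-\bar\lambda_0 w)^{2/p-1}}{w-\lambda}f(w)g(w)\,d\nu(w).
\]
For $\lambda\in B(e^{i\theta},\delta)$ and $\lambda_0\in\Gamma_{1/2}(e^{i\theta})$ with $|\lambda_0-e^{i\theta}|=\delta/2$ one has $|1-\bar\lambda_0\lambda|\le C_0\delta$ and $|1-\bar\lambda_0 w|\sim|w-e^{i\theta}|+\delta$ uniformly for $w\in\bar{\mathbb D}$; hence the prefactor $\delta^{-1/p}(1-\bar\lambda_0\lambda)$ contributes a factor of size $\delta^{1/q}$.

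Fix a large constant $R=R_a>2$ and split the integral at the scale $R\delta$. On the far region $|w-e^{i\theta}|\ge R\delta$ one has $|w-\lambda|\ge\tfrac12|w-e^{i\theta}|$ and $|1-\bar\lambda_0 w|\sim|w-e^{i\theta}|$, so the integrand is dominated by $C|w-e^{i\theta}|^{2/p-2}=C|w-e^{i\theta}|^{-2/q}$. H\"older's inequality combined with a dyadic decomposition (using only the finiteness of $|g|^q d|\nu|$) yields
\[
\int_{|w-e^{i\theta}|\ge R\delta}\frac{|g|^q}{|w-e^{i\theta}|^{2}}\,d|\nu|\le\frac{C_{g,\nu}}{R\delta},
\]
so the far contribution to the difference in \eqref{CT3} is at most $CR^{-1/q}\|f\|_{L^p(|\nu|)}$; fix $R_a$ (depending only on $a,g,|\nu|$) so this is $\le\tfrac{a}{2}\|f\|_{L^p(|\nu|)}$. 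On the near region $|w-e^{i\theta}|<R_a\delta$ one has the uniform estimate $|1-\bar\lambda_0 w|^{2/p-1}\le C_{R_a}\delta^{2/p-1}$ (use the upper bound on $|1-\bar\lambda_0 w|$ when $2/p-1\ge 0$ and the lower bound $|1-\bar\lambda_0 w|\ge c\delta$ from $\lambda_0\in\mathbb{D}$ when $2/p-1<0$). Setting $\rho:=(1-\bar\lambda_0 z)^{2/p-1}fg\,\nu|_{B(e^{i\theta},R_a\delta)}$, H\"older's inequality, the identity $\tfrac{2}{p}-1+\tfrac{1}{q}=\tfrac{1}{p}$, and \eqref{zerodensity} give
\[
\|\rho\|\le C_{R_a}'\,\delta^{1/p}\|f\|_{L^p(|\nu|)}\,\eta(R_a\delta)^{1/q},
\]
where $\eta(r):=r^{-1}\int_{B(e^{i\theta},r)}|g|^q d|\nu|\to 0$ as $r\to 0$.

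The near contribution to the difference in \eqref{CT3} is dominated by $C_0\delta^{1/q}\mathcal{C}_*(\rho)(\lambda)$, so with threshold $T:=\tfrac{a}{2C_0}\|f\|_{L^p(|\nu|)}\delta^{-1/q}$, Tolsa's estimate \eqref{WeakOneOne} yields
\[
\gamma\bigl(\{\mathcal{C}_*(\rho)\ge T\}\bigr)\le\frac{C_T\|\rho\|}{T}\le C''_a\,\delta\,\eta(R_a\delta)^{1/q}.
\]
Take $E_\delta^f$ to be this set intersected with $B(e^{i\theta},\delta)$, unioned with the zero-analytic-capacity set on which the principal value in \eqref{CT30} fails to exist (Corollary 3.1 of \cite{acy18}). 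Setting $\epsilon(\delta):=C''_a\,\eta(R_a\delta)^{1/q}$ delivers \eqref{CT1} and \eqref{CT2}, and adding the near and far estimates yields \eqref{CT3}. The main technical point is the order of quantifiers: $R_a$ must be fixed as a function of $a,g,|\nu|$ before letting $\delta$ shrink, so that $\eta(R_a\delta)\to 0$ without any prescribed rate in \eqref{zerodensity}; this is also what makes $E_\delta^f$ depend on $f$ (through $\|f\|_{L^p(|\nu|)}$ in $T$) while $\epsilon(\delta)$ does not.
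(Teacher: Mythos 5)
Your proof is correct and follows essentially the same route as the paper: a resolvent identity to collapse the difference of Cauchy transforms, a dyadic far-tail estimate controlled by the finiteness of $\sup_r r^{-1}\int_{B(e^{i\theta},r)}|g|^q\,d|\nu|$, Tolsa's weak $(1,1)$ bound \eqref{WeakOneOne} applied to a localized near measure whose total variation is controlled by \eqref{zerodensity}, and an exceptional set built from the sublevel set of $\mathcal{C}_*$ together with the zero-capacity set where the principal value fails. The paper also tracks a small middle term arising from truncating $\mathcal{C}_\epsilon$ at $\lambda$ versus leaving $\mathcal{C}(\cdot)(1/\bar\lambda_0)$ untruncated, which you elide by working directly with the principal values; this remainder vanishes as $\epsilon\to 0$ off the zero-capacity set, so the omission is cosmetic rather than substantive.
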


\begin{proof} Let 
 \[ 
 \ M = \sup _{r > 0 }\frac{\int _{B(e^{i\theta}, r)}|g|^qd| \nu |}{r }.
 \]
 Then, by \eqref{zerodensity}, $M < \infty.$
For $a > 0$, choose $N$ and $\delta_a$, $0 < \delta_a < \frac{1}{4}$, satisfying:
 \[
 \ N = 6 + \left (\dfrac{256}{a} \sum_{k=0}^\infty 2^{\frac{-k}{q}}  \right)^q M,
 \]
 \[
 \ \left( \dfrac{\int _{B(\lambda _0, N\delta)}|g|^qd| \nu |}{\delta }\right)^\frac{1}{q} < \dfrac{a}{4^{3+\frac{2}{q}}}
 \]
for $0 < \delta < \delta_a.$
We now fix $\delta$, $0 < \delta < \delta_a$, and let 
 \[ 
 \ \nu_\delta = \dfrac{\chi _{B (e^{i\theta}, N \delta )}}{(1 - \bar{\lambda} _0 z )^{1-\frac{2}{p}}\delta^{\frac{1}{p}}}fg \nu ,
\]
 where $\chi _A$ denotes the characteristic function of the set $A$.  
For $0 < \epsilon < \delta $ and $\lambda \in B (e^{i\theta} , \delta ),$ we get:
 \[
 \ 2(1 - |\lambda _0|) \le \delta \le 4(1 - |\lambda _0|),
 \]
 \[
 \ \overline {B (\lambda , \epsilon)} \subset B (e^{i\theta}, 2 \delta ) \subset B (e^{i\theta}, N\delta),
 \]
and
 \[ 
 \ \begin{aligned}
 \ & \left  |\mathcal C _\epsilon \left ((1 - \bar \lambda _0 z)^{\frac{2}{p}}\delta^{-\frac{1}{p}}fg\nu \right )(\lambda) - \mathcal C  \left ((1 - \bar \lambda _0 z)^{\frac{2}{p}}\delta^{-\frac{1}{p}}fg\nu \right )(\frac{1}{\bar{\lambda} _0}) \right | \\
 \ \le & \dfrac{|1 - \bar{\lambda} _0 \lambda |}{\delta^{\frac{1}{p}}} \left | \int _{|z - \lambda| > \epsilon}\dfrac{fgd\nu  }{(z - \lambda)(1 - \bar{\lambda}_0 z)^{1-\frac{2}{p}}} \right | \\
\ & + \left | \mathcal C  \left (\chi _{\bar B (\lambda, \epsilon)}\dfrac{(1 - \bar \lambda _0 z)^{\frac{2}{p}}}{\delta^{\frac{1}{p}}}fg\nu \right )(\frac{1}{\bar{\lambda} _0}) \right |\\
\ \le & 2\delta^{\frac{1}{q}}  \left | \int _{ B (e^{i\theta}, N\delta )^c}\dfrac{fgd\nu}{(z - \lambda)(1 - \bar{\lambda}_0 z)^{1-\frac{2}{p}}} \right | + 2\delta \left |\int _{|z - \lambda| > \epsilon}\dfrac{d\nu_\delta}{(z - \lambda)} \right | \\
 \ & + \int_{\bar B (\lambda, \epsilon)}\dfrac{\delta^{-\frac{1}{p}}}{|1 - \bar \lambda _0 z|^{1-\frac{2}{p}}}|fg|d|\nu | \\
 \ \le & 2\delta^{\frac{1}{q}} \sum_{k=0}^{\infty}\int _{2^kN\delta \le |z - e^{i\theta} | < 2^{k+1}N\delta} \dfrac{|f||g|d|\nu |}{|z - \lambda ||1 - \bar{\lambda} _0 z |^{1-\frac{2}{p}} }  + 2\delta |\mathcal C_\epsilon \nu _\delta (\lambda )| \\
 \ & + \int_{ B (e^{i\theta}, 2 \delta)}\dfrac{|1 - \bar \lambda _0 z|\delta^{-\frac{1}{p}}}{|1 - \bar \lambda _0 z|^{\frac{2}{q}}}|fg|d|\nu | \\
 \ \le & 2\delta^{\frac{1}{q}} \sum_{k=0}^{\infty} \dfrac{(2^{k+1}N\delta)^{\frac{1}{q}}(2^kN\delta  + 2\delta )^{\frac{2}{p}}}{(2^kN\delta  - \delta )(2^kN\delta  - 2\delta )} \left (\dfrac{\int _{B (e^{i\theta}, 2^{k+1}N\delta)}|g|^qd|\nu |}{2^{k+1}N\delta} \right )^{\frac{1}{q}} \|f\|_{L^{p} (| \nu |)} \\
\ &+ 2\delta \mathcal  C_* \nu _\delta (\lambda ) + 4\int_{ B (e^{i\theta}, 2 \delta)}\dfrac{\delta^{\frac{1}{q}}}{|1 - \bar \lambda _0 z|^{\frac{2}{q}}}|fg|d|\nu |\\
 \ \le & \dfrac{4(N+2)^{1+\frac{1}{p}}\sum_{k=0}^\infty 2^{\frac{-k}{q}}M^{\frac{1}{q}}}{(N-1)(N-2)} \|f\|_{L^{p} (| \nu |)} + 2\delta \mathcal C_* \nu _\delta (\lambda ) \\
\ & + 4^{1+\frac{2}{q}} \|f\|_{L^{p} (| \nu |)} \left (\dfrac{\int_{ B (e^{i\theta}, 2 \delta)} |g|^qd|\nu |}{\delta} \right )^{\frac{1}{q}}\\
\ \le &\dfrac{a}{4} \|f\|_{L^{p} (| \nu |)} + 2\delta  \mathcal C_* \nu _\delta (\lambda ).
\ \end{aligned}
 \]
Let
 \[
 \ \mathcal{E}_{\delta} = \{\lambda : C_*(\nu _{\delta})(\lambda ) \ge \frac{a\|f\|_{L^{p} (| \nu |)}}{8\delta} \} \cap B (\lambda _0, \delta).
 \]
Then
 \[
 \ \begin{aligned}
 \ & \{\lambda : |\mathcal C _\epsilon \left ((1 - \bar \lambda _0 z)^{\frac{2}{p}}\delta^{-\frac{1}{p}}fg\nu \right )(\lambda) - \mathcal C  \left ((1 - \bar \lambda _0 z)^{\frac{2}{p}}\delta^{-\frac{1}{p}}fg\nu \right )(\frac{1}{\bar{\lambda} _0})| \\
\ &\ge a \|f\|_{L^{p} (| \nu |)}\} \cap B (\lambda _0, \delta) \subset \mathcal{E}_{\delta}.
 \ \end{aligned}
 \]
From Theorem \ref{TTolsa} (3), we get
 \[
 \ \gamma (\mathcal{E}_{\delta}) \le \dfrac{8C_T\delta}{a\|f\|_{L^{p} (| \nu |)}} \| \nu _{\delta} \| \le \dfrac{32C_T\delta}{a}\left (\dfrac{\int_{ B (e^{i\theta}, N \delta)} |g|^qd|\nu |}{\delta} \right )^{\frac{1}{q}} .
 \]
Let $E$ be the set of $\lambda\in \mathbb{C}$ such that $\lim_{\epsilon\rightarrow 0}\mathcal C _\epsilon \left (fg\nu \right )(\lambda)$ does not exist. By Corollary 3.1 in \cite{acy18}, 
we see that $\gamma (E) = 0$. It is clear that \eqref{CT30} exists for $\lambda \in \mathbb D \setminus E$ because
 \[
 \ \begin{aligned}
 \ & \lim_{\epsilon\rightarrow 0}\mathcal C _\epsilon \left ((1 - \bar \lambda _0 z)^{\frac{2}{p}}\delta^{-\frac{1}{p}}fg\nu \right )(\lambda) - (1 - \bar \lambda _0 \lambda )^{\frac{2}{p}}\delta^{-\frac{1}{p}}\lim_{\epsilon\rightarrow 0}\mathcal C _\epsilon \left (fg\nu \right )(\lambda) \\
\ = & \int \dfrac{(1 - \bar \lambda _0 z)^{\frac{2}{p}}\delta^{-\frac{1}{p}} - (1 - \bar \lambda _0 \lambda )^{\frac{2}{p}}\delta^{-\frac{1}{p}}}{z - \lambda}fgd\nu
 \ \end{aligned}
 \]
exists for all $\lambda \in \mathbb D \setminus E$. 

Now let $E_{\delta}^f = \mathcal{E}_{\delta} \cup E$. Applying Theorem \ref{TTolsa} (2) we find that
 \[
 \ \gamma(E_{\delta}) \le A_T (\gamma (\mathcal{E}_{\delta})+ \gamma (E)) < \dfrac{32A_TC_T}{a}\left (\dfrac{\int_{ B (e^{i\theta}, N \delta)} |g|^qd|\nu |}{\delta} \right )^{\frac{1}{q}}  \delta. 
 \]
Letting
 \[
 \ \epsilon (\delta) = \dfrac{32A_TC_T}{a}\left (\dfrac{\int_{ B (e^{i\theta}, N \delta)} |g|^qd|\nu |}{\delta} \right )^{\frac{1}{q}}, 
 \]
we conclude that \eqref{CT1} and \eqref{CT2} hold.
On $B (\lambda _0, \delta ) \setminus E_\delta$ and for $\epsilon < \delta$, we conclude that
 \[
 \ \left  |\mathcal C _\epsilon \left ((1 - \bar \lambda _0 z)^{\frac{2}{p}}\delta^{-\frac{1}{p}}fg\nu \right )(\lambda) - \mathcal C  \left ((1 - \bar \lambda _0 z)^{\frac{2}{p}}\delta^{-\frac{1}{p}}fg\nu \right )(\frac{1}{\bar{\lambda} _0}) \right | < a \|f\|_{L^{p} (| \nu |)}.
 \]
Therefore, \eqref{CT3} follows since
 \[
 \ \lim_{_\epsilon\rightarrow 0} \mathcal C _\epsilon \left ((1 - \bar \lambda _0 z)^{\frac{2}{p}}\delta^{-\frac{1}{p}}fg\nu \right )(\lambda) = \mathcal C \left ((1 - \bar \lambda _0 z)^{\frac{2}{p}}\delta^{-\frac{1}{p}}fg\nu \right )(\lambda).
 \]
\end{proof}
\medskip

\begin{prop}\label{MProposition2} Let $\mu$ be a finite positive  measure with support in $ K \subset\bar {\mathbb D}$  and $\mu | _{\mathbb T} = hm.$  Let $1 < p <\infty, ~ q = \frac{p}{p-1}, ~ f\in C(\bar{\mathbb D}), ~ g \in L^q (\mu ),$ and $fg\mu \perp Rat(K).$ Then for $0 < \beta < \frac{1}{16},$ $b > 0,$ and $m$-almost all $e^{i\theta}\in \mathbb T,$ there exist $0 < \delta_a < \frac{1}{4},$ $E_{\delta}^f \subset B(e^{i\theta}, \delta),$ and $\epsilon (\delta ) > 0,$ where $0 < \delta < \delta_a,$ such that $\lim_{\delta\rightarrow 0}\epsilon (\delta ) = 0,$ $\gamma(E_\delta ^f) < \epsilon (\delta ) \delta ,$ and for $\lambda _0 \in (\partial B (e^{i\theta}, \frac{\delta}{2} )) \cap \Gamma _{\frac{1}{4}}(e^{i\theta}),$
\[
\ \left  |\mathcal C \left (\dfrac{(1 - \bar \lambda _0 z)^{\frac{2}{p}}}{ (1-|\lambda _0|^2)^{\frac{1}{p}}}fg\mu \right )(\lambda) \right | \le \left(b + \dfrac{1+4\beta}{1-4\beta} \left ( \int _{\mathbb T} \dfrac{1 - |\lambda _0|^2}{|1 - \bar \lambda _0 z |^2} |g|^qd\mu \right )^{\frac{1}{q}} \right ) \|f\|_{L^p(\mu )}
\]
for all $\lambda\in B (\lambda _0, \beta \delta ) \setminus E_\delta^f.$
\end{prop}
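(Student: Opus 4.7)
The plan is to exploit the orthogonality $fg\mu\perp\mbox{Rat}(K)$ to replace the target Cauchy transform by a difference of two Cauchy transforms---one at $\lambda$ and one vanishing at the reflected point $1/\bar\lambda_0$---and then split the measure on and off $\mathbb T$ to handle each piece with a different tool. The key vanishing input is this: since $|1/\bar\lambda_0|>1$ and $K\subset\bar{\mathbb D}$, the function $(1-\bar\lambda_0 z)^{2/p-1}$ is bounded and analytic on $\bar{\mathbb D}$, so by Mergelyan's theorem it is uniformly approximable on $K$ by polynomials, which lie in $\mbox{Rat}(K)$. Together with $fg\in L^1(\mu)$, the orthogonality yields $\int(1-\bar\lambda_0 z)^{2/p-1}fg\,d\mu=0$, i.e., $\mathcal C((1-\bar\lambda_0 z)^{2/p}fg\mu)(1/\bar\lambda_0)=0$. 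Subtracting this zero and using $\frac{1}{z-\lambda}-\frac{1}{z-1/\bar\lambda_0}=\frac{1-\bar\lambda_0\lambda}{(z-\lambda)(1-\bar\lambda_0 z)}$, one gets
\[
\mathcal C\!\left(\dfrac{(1-\bar\lambda_0 z)^{2/p}}{(1-|\lambda_0|^2)^{1/p}}fg\mu\right)\!(\lambda)=\dfrac{1-\bar\lambda_0\lambda}{(1-|\lambda_0|^2)^{1/p}}\int\dfrac{(1-\bar\lambda_0 z)^{2/p-1}}{z-\lambda}fg\,d\mu.
\]

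Splitting $\mu=hm+\mu_2$ with $\mu_2=\mu|_{\bar{\mathbb D}\setminus\mathbb T}$, I would bound the $hm$-piece by H\"older with exponents $p,q$, reducing to the integral of $|1-\bar\lambda_0 z|^{q(2/p-1)}|g|^q h/|z-\lambda|^q$ over $\mathbb T$. The crucial geometric inputs are: for $z\in\mathbb T$ one has $|1-\bar\lambda_0 z|=|z-\lambda_0|$; since $|\lambda-\lambda_0|\le\beta\delta\le 4\beta(1-|\lambda_0|)\le 4\beta|z-\lambda_0|$, also $|z-\lambda|\ge(1-4\beta)|z-\lambda_0|$; and the exponent arithmetic $q(2/p-1)-q=-2$ collapses the integrand to $|g|^q h/((1-4\beta)^q|1-\bar\lambda_0 z|^2)$. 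Combined with $|1-\bar\lambda_0\lambda|\le(1+4\beta)(1-|\lambda_0|^2)$ and $1/p+1/q=1$, this yields exactly the claimed factor $(1+4\beta)/(1-4\beta)$ in front of the Poisson-type integral $\bigl(\int_{\mathbb T}(1-|\lambda_0|^2)|g|^q d\mu/|1-\bar\lambda_0 z|^2\bigr)^{1/q}$.

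For the $\mu_2$-piece, I would reverse the algebraic identity to rewrite it as $(1-|\lambda_0|^2)^{-1/p}\bigl[\mathcal C((1-\bar\lambda_0 z)^{2/p}fg\mu_2)(\lambda)-\mathcal C((1-\bar\lambda_0 z)^{2/p}fg\mu_2)(1/\bar\lambda_0)\bigr]$, to which Lemma \ref{CauchyTLemma} applies since $|\mu_2|(\mathbb T)=0$. The zero-density hypothesis \eqref{zerodensity} for $|g|^q d\mu_2$ at $m$-a.e.\ $e^{i\theta}$ follows from a Vitali-covering argument exploiting $\mu_2(\mathbb T)=0$. Choosing the Lemma's parameter $a=b/4^{1/p}$ and absorbing the mismatch between $\delta^{-1/p}$ (in the Lemma) and $(1-|\lambda_0|^2)^{-1/p}$ (in the Proposition) via $\delta/(1-|\lambda_0|^2)\le 4$ yields an error $\le b\|f\|_{L^p(\mu)}$; the exceptional set $E_\delta^f\subset B(e^{i\theta},\delta)$ and gauge $\epsilon(\delta)$ are inherited from the Lemma.

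The hard part will be keeping the constants sharp: the factor $(1+4\beta)/(1-4\beta)$ is not automatic and demands that the $\beta$-dependence in $|z-\lambda|\ge(1-4\beta)|z-\lambda_0|$ and $|1-\bar\lambda_0\lambda|\le(1+4\beta)(1-|\lambda_0|^2)$ be tracked exactly, rather than absorbed into $O(1)$. A secondary subtlety is verifying the zero-density condition at $m$-a.e.\ boundary point for the off-circle measure, which requires a stopping-time or Vitali-covering argument (given only $\mu_2(\mathbb T)=0$) to conclude that the linear density in $r$ vanishes along $\mathbb T$.
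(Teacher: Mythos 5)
Your proposal is correct and follows essentially the same route as the paper: exploit $fg\mu\perp\mbox{Rat}(K)$ to obtain $\mathcal C((1-\bar\lambda_0 z)^{2/p}fg\mu)(1/\bar\lambda_0)=0$ (your Taylor-series/Mergelyan observation is exactly why the integrand $(1-\bar\lambda_0 z)^{2/p-1}$ is annihilated), split $\mu$ into its $\mathbb T$-part and its interior part, apply Lemma~\ref{CauchyTLemma} to the interior part $\nu=\mu|_{\mathbb D}$, and bound the $\mathbb T$-integral directly via the identity $|1-\bar\lambda_0 z|=|z-\lambda_0|$ for $z\in\mathbb T$, the estimates $|z-\lambda|\ge(1-4\beta)|z-\lambda_0|$ and $|1-\bar\lambda_0\lambda|\le(1+4\beta)(1-|\lambda_0|^2)$, and H\"older. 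Your exponent arithmetic $q(2/p-1)-q=-2$ collapses the kernel to the Poisson form exactly as in the paper's chain of inequalities, and your choice $a=b/4^{1/p}$ (versus the paper's $b/2^{1/p}$) is if anything slightly more conservative and works. The only point of divergence is the zero-density hypothesis \eqref{zerodensity}: you propose a Vitali-covering argument (which indeed suffices, since the maximal-function estimate applied to $|g|^q\,d|\nu|$ restricted near $\mathbb T$, where its mass tends to $0$, yields vanishing $1$-density $m$-a.e.), whereas the paper invokes Lemma~3.5 of \cite{acy18}, which gives the stronger conclusion that the exceptional set has zero analytic capacity; for the present Proposition the $m$-a.e.\ form is all that is used, so your argument is adequate.
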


\begin{proof} Let $\nu = \mu | _{\mathbb D}.$ We now apply Lemma \ref{CauchyTLemma} for $p, ~ q,~f,~ g, $ and $a=\frac{1}{2^{\frac{1}{p}}}b.$ From Lemma 3.5 in \cite{acy18}, there exists $E$ with $\gamma(E) = 0$ such that for $e^{i\theta} \in \mathbb T\setminus E$, $|g|^qd|\nu|$ satisfies \eqref{zerodensity}. There exist $0 < \delta_a < \frac{1}{4},$ $E_{\delta}^f \subset B(e^{i\theta}, \delta),$ and $\epsilon (\delta ) > 0,$ where $0 < \delta < \delta_a ,$ such that $\lim_{\delta\rightarrow 0}\epsilon (\delta ) = 0,$ $\gamma(E_\delta ^f) < \epsilon (\delta ) \delta ,$ and for $\lambda _0 \in (\partial B (e^{i\theta}, \frac{\delta}{2} )) \cap \Gamma _{\frac{1}{4}}(e^{i\theta}),$
\[
\ \left | \mathcal C \left (\dfrac{(1 - \bar \lambda _0 z)^{\frac{2}{p}}}{ (1-|\lambda _0|^2)^{\frac{1}{p}}}fg\nu \right )(\lambda ) - \mathcal C \left (\dfrac{(1 - \bar \lambda _0 z)^{\frac{2}{p}}}{ (1-|\lambda _0|^2)^{\frac{1}{p}}}fg\nu \right )(\frac{1}{\bar{\lambda} _0})\right | \le b  \|f\|_{L^p(\mu )}
 \]
for all $\lambda\in B (e^{i\theta }, \delta ) \setminus E_\delta^f.$
  \[
\ \mathcal C \left (\dfrac{(1 - \bar \lambda _0 z)^{\frac{2}{p}}}{ (1-|\lambda _0|^2)^{\frac{1}{p}}}fg\mu \right )(\frac{1}{\bar{\lambda} _0}) = 0
\]
since $fg\mu \perp Rat(K).$ Therefore, for all $\lambda\in B (\lambda _0, \beta \delta ) \setminus E_\delta^f,$ we get
 \[
 \ \begin{aligned}
 \ & \left  |\mathcal C \left (\dfrac{(1 - \bar \lambda _0 z)^{\frac{2}{p}}}{ (1-|\lambda _0|^2)^{\frac{1}{p}}}fg\mu \right )(\lambda) \right | \\
 \ \le & \left  |\mathcal C \left (\dfrac{(1 - \bar \lambda _0 z)^{\frac{2}{p}}}{ (1-|\lambda _0|^2)^{\frac{1}{p}}}fg\nu \right )(\lambda) - \mathcal C \left (\dfrac{(1 - \bar \lambda _0 z)^{\frac{2}{p}}}{ (1-|\lambda _0|^2)^{\frac{1}{p}}}fg\nu \right )(\frac{1}{\bar{\lambda} _0})\right | \\
 \ & + \left  |\int_{\mathbb T} \left ( \dfrac{1}{z - \lambda} - \dfrac{1}{z - \frac{1}{\bar{\lambda} _0}} \right )\dfrac{(1 - \bar \lambda _0 z)^{\frac{2}{p}}}{ (1-|\lambda _0|^2)^{\frac{1}{p}}}fg\mu\right | \\
 \ \le & b \|f\|_{L^p(\mu )} + \int_{\mathbb T} \dfrac{|1 - \lambda \bar{\lambda} _0| }{|z - \lambda |} \dfrac{ (1 - |\lambda _0|^2)^{-\frac{1}{p}} }{ |1 - \bar \lambda _0 z|^{1 - \frac{2}{p}}} |fg| d \mu \\
 \ \le & b \|f\|_{L^p(\mu )} + \dfrac{1+4\beta}{1-4\beta} \int_{\mathbb T} \dfrac{ (1 - |\lambda _0|^2)^{\frac{1}{q}}}{ |1 - \bar \lambda _0 z|^{\frac{2}{q}}} |fg| d \mu 
 \ \end{aligned}
 \]
 where the last step follows from
 \[
 \ \begin{aligned}
 \ \dfrac{|1 - \lambda \bar{\lambda} _0| }{|z - \lambda |} \le & \dfrac{1 - |\lambda _0|^2 + |\lambda _0||\lambda - \lambda _0| }{|z - \lambda _0 | - |\lambda - \lambda _0|} \\
\  \le & \dfrac{(1+4\beta)(1 - |\lambda _0|^2)}{|z - \lambda _0 | - 4\beta (1 - |\lambda _0|)} \\
\ \le &\dfrac{(1+4\beta)(1 - |\lambda _0|^2)}{(1 - 4\beta )|1 - \bar\lambda _0 z|}
\ \end{aligned} 
\]
for $z\in\mathbb T.$ The proposition now follows from Holder's inequality.
\end{proof}
\medskip

Let $R = \{z :~ |Re(z)| < \frac{1}{2}\text{ and }|Im(z)| < \frac{1}{2}\}$ and $Q = \bar{\mathbb D}\setminus R.$ For a bounded Borel set
$E\subset \mathbb C$ and $1\le p \le \infty,$ $L^p(E)$ denotes the $L^p$ space with respect to the area measure $dA$ restricted to $E.$  The following Lemma is a simple application of Thomson's coloring scheme.

\begin{lem} \label{lemmaDSet}
There is an absolute constant $\epsilon _1 > 0$ with the
following property. If $\gamma (\mathbb D \setminus K) < \epsilon_1,$ then
\[
\ |f(\lambda ) | \le \|f\|_{L^\infty (Q\cap K)}
\]
for $\lambda \in R$ and $f \in A(\mathbb D),$ the uniform closure of $\mathcal P$ in $C(\bar {\mathbb D}).$
\end{lem}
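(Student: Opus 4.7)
The plan is to reduce the lemma to the following geometric claim: provided the absolute constant $\epsilon_1>0$ is chosen small enough, whenever $\gamma(\mathbb D\setminus K)<\epsilon_1$ there is a rectifiable Jordan curve $\widetilde\Gamma\subset Q\cap K$ that encloses $R$. Once such a $\widetilde\Gamma$ is in hand, for any $\lambda\in R$ the function $f\in A(\mathbb D)$ is analytic on the Jordan domain bounded by $\widetilde\Gamma$ (which sits inside $\mathbb D$) and continuous up to its boundary, so the maximum modulus principle yields
\[
|f(\lambda)|\le \sup_{z\in\widetilde\Gamma}|f(z)|\le \|f\|_{L^\infty(Q\cap K)},
\]
which is the conclusion of the lemma.

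To construct $\widetilde\Gamma$ I would invoke Thomson's dyadic coloring scheme. Fix a dyadic grid on $\bar{\mathbb D}$ with squares of small side length $\eta$, say $\eta=\frac{1}{100}$. Call a square $S$ \emph{heavy} if $\gamma(S\cap(\mathbb D\setminus K))\ge c_0\eta$ for a small absolute constant $c_0$, and \emph{light} otherwise. By Tolsa's semi-additivity (Theorem~\ref{TTolsa}(2)),
\[
\#\{S:\ S\text{ is heavy}\}\le \frac{A_T}{c_0\eta}\,\gamma(\mathbb D\setminus K)<\frac{A_T\,\epsilon_1}{c_0\eta}.
\]
When $\epsilon_1$ is much smaller than $c_0\eta$, a pigeonhole argument over the many concentric polygonal rings of dyadic squares that surround $R$ inside $Q$ produces a closed polygonal Jordan loop $\Gamma\subset Q$ consisting entirely of light squares.

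Within each light square $S$ along $\Gamma$, the smallness $\gamma(S\cap(\mathbb D\setminus K))<c_0\eta$ rules out any connected subset of $\mathbb D\setminus K$ joining two opposite sides of $S$: indeed, any compact connected $E\subset\mathbb C$ satisfies $\gamma(E)\ge c\,\text{diam}(E)$ for an absolute constant $c>0$, so a connected cross-cut inside $S$ would have analytic capacity at least $c\eta$, contradicting lightness once $c_0<c$. Consequently $K\cap S$ connects the opposite sides of $S$ through which $\Gamma$ enters and exits, and by routing $\Gamma$ through these local connectors inside $K$ we obtain the desired rectifiable Jordan curve $\widetilde\Gamma\subset Q\cap K$ enclosing $R$.

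The main obstacle is the step of pushing $\Gamma$ \emph{into} $K$ rather than merely near $K$: the polygonal loop coming from the coloring is only a chain of light squares, and showing that each such square contains an arc of $K$ between the two sides where $\Gamma$ passes relies on the connected-set estimate $\gamma(E)\ge c\,\text{diam}(E)$ together with Tolsa's semi-additivity. This is precisely what Thomson's coloring scheme from \cite{thomson} delivers; once it is carried out, the concluding maximum-modulus step and the remaining bookkeeping (rectifiability of $\widetilde\Gamma$, containment in $Q\cap K$, and correct topological type) are routine.
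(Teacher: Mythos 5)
Your proposal takes a genuinely different route from the paper, and it contains a real gap at the critical step.

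The paper does not construct any curve inside $K$. It runs Thomson's coloring scheme from \cite{y19} and, once $\gamma(\mathbb D\setminus K)<\epsilon_1$ forces the scheme not to terminate, obtains an infinite nested sequence of heavy $\epsilon$ barriers $\gamma_n$ at dyadic scales $2^{-n}$ lying in $Q$ (but not in $K$). For each $n$ it picks, via the maximum modulus principle, a point $z_n\in\gamma_n$ with $|f(\lambda)|\le|f(z_n)|$ for $\lambda\in R$; it then uses the heaviness of the square $S_n$ containing $z_n$ to conclude $\mathrm{Area}(S_n\cap K)>0$ and picks a point $w_n\in S_n\cap K$ where $|f|$ is maximized. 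The key estimate is then a second application of the maximum modulus principle to $\frac{f(w)-f(z_n)}{w-z_n}$, using $|z_n-w_n|\le\sqrt2\cdot 2^{-n}$ together with the separation $\mathrm{dist}(z_n,\gamma_{n+1})\ge n^2 2^{-n}$ from Lemma 2.1 of \cite{thomson}, which makes the correction term $\frac{2\sqrt2\,2^{-n}\|f\|_\infty}{n^2 2^{-n}}\to0$. That limiting argument is exactly what lets the paper avoid ever finding a curve inside $K$.

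Your approach instead tries to build a rectifiable Jordan curve $\widetilde\Gamma\subset Q\cap K$ enclosing $R$. The gap is the sentence ``Consequently $K\cap S$ connects the opposite sides of $S$, and by routing $\Gamma$ through these local connectors inside $K$ we obtain the desired rectifiable Jordan curve.'' From $\gamma\bigl(S\cap(\mathbb D\setminus K)\bigr)<c_0\eta$ and the lower bound $\gamma(E)\ge c\,\mathrm{diam}(E)$ for connected $E$, you do correctly conclude that no connected subset of $\mathbb D\setminus K$ can cross $S$. But this does not deliver a rectifiable arc in $K$. At best a plane-separation argument yields a closed \emph{connected} subset of $K\cap S$ joining two sides of $S$; such a continuum need not be locally connected, need not contain any arc, and certainly need not be rectifiable, and no capacity hypothesis on $\mathbb D\setminus K$ forces it to be. Without an arc you cannot form the Jordan curve $\widetilde\Gamma$, and so the concluding maximum-modulus step has nothing to run on. This is precisely the obstruction that the paper's nested-barrier limiting argument is designed to circumvent, and it is why the paper works with points $w_n\in K$ near the barriers rather than with a curve in $K$.
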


\begin{proof} We use Thomson's coloring scheme that is described at the beginning of section 2 of \cite{y19}. Let $\epsilon_1$ be chosen as in Lemma 2 of \cite{y19}. By our assumption $\gamma (\mathbb D \setminus K) < \epsilon_1$ and Lemma 2 of \cite{y19}, we conclude that Case II on Page 225 of \cite{y19} holds, that is, $scheme(Q, \epsilon, m, \gamma_n, \Gamma_n, n \ge m)$ ($\epsilon < 10^{-3}$) does not terminate. In this case, one has a sequence of heavy $\epsilon$ barriers inside $Q,$ that is, $\{\gamma_n\}_{n\ge m}$ and $\{\Gamma_n\}_{n\ge m}$ are infinite.

Let $f\in A(\mathbb D),$ by the maximal modulus principle, we can find $z_n\in\gamma_n$ such that $|f(\lambda )| \le |f(z_n)|$ for $\lambda \in R.$ By the definition of $\gamma_n,$ we can find a heavy $\epsilon$ square $S_n$ with $z_n\in S_n\cap\gamma_n.$ Since $\gamma(Int(S_n)\setminus K) \le \epsilon d_{S_n}$ (see (2.2) in \cite{y19}), we must have $Area(S_n\cap K) > 0$. We can choose $w_n\in S_n\cap K$ with $|f(w_n)| = \|f\|_{L^\infty (S_n\cap K)}.$ $\frac{f(w)-f(z_n)}{w-z_n}$ is analytic in $\mathbb D,$ therefore, by the maximal modulus principle again, we get
 \[
 \ \left | \dfrac{f(w_n)-f(z_n)}{w_n-z_n} \right | \le \sup_{w \in \gamma_{n+1}} \left | \dfrac{f(w)-f(z_n)}{w-z_n} \right | \le \dfrac{2\|f\|_{L^\infty (\mathbb D)}}{dist (z_n,\gamma_{n+1})}.
 \]
By Lemma 2.1 in \cite{thomson} (there is a buffer zone of yellow squares between $\gamma_n$ and $\gamma_{n+1}$), we have $dist (z_n,\gamma_{n+1})\ge n^2 2^{-n}$.  
Therefore,
 \[
 \ \begin{aligned}
 \ |f(\lambda )| \le & |f(z_n)| \le |f(w_n)| + \dfrac{2|z_n-w_n|\|f\|_{L^\infty (\mathbb D)}}{dist (z_n,\gamma_{n+1})} \\ 
 \ \le &\|f\|_{L^\infty(Q\cap K)} + \dfrac{2\sqrt 2 2^{-n}\|f\|_{L^\infty (\mathbb D)}}{n^2 2^{-n}}
\ \end{aligned}
 \]
 for $\lambda \in R.$
The lemma follows by taking $n\rightarrow \infty .$
\end{proof}
\medskip

\begin{cor} \label{CorollaryDSet}
There is an absolute constant $\epsilon _1 > 0$ with the
following property. If $\lambda _0 \in \mathbb C, ~ \delta > 0,$ and $\gamma (B(\lambda _0 , \delta) \setminus K) < \epsilon_1\delta ,$ then
\[
\ |f(\lambda ) | \le \|f\|_{L^\infty (B(\lambda _0 , \delta)\cap K)}
\]
for $\lambda \in B(\lambda _0 , \frac{\delta}{2})$ and $f \in A(B(\lambda _0 , \delta)),$ the uniform closure of $\mathcal P$ in $C(\overline {B(\lambda _0 , \delta)}).$
\end{cor}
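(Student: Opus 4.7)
The plan is to reduce Corollary \ref{CorollaryDSet} to Lemma \ref{lemmaDSet} by an affine rescaling that carries $B(\lambda_0, \delta)$ onto $\mathbb D$. Set $\phi(z) = (z - \lambda_0)/\delta$, whose inverse $\phi^{-1}(w) = \lambda_0 + \delta w$ is a polynomial in $w$, and define $\tilde K := \phi(K \cap \overline{B(\lambda_0, \delta)}) \subset \overline{\mathbb D}$. The key observation is that analytic capacity scales linearly under affine maps: for any $\phi(z) = az + b$, the identity $(f \circ \phi)'(\infty) = f'(\infty)/a$ follows directly from the definition of $f'(\infty)$, giving $\gamma(\phi(E)) = |a|\,\gamma(E)$. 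Applied here with $a = 1/\delta$, we obtain $\gamma(\mathbb D \setminus \tilde K) = \gamma(B(\lambda_0, \delta) \setminus K)/\delta < \epsilon_1$, so the hypothesis of Lemma \ref{lemmaDSet} is met for $\tilde K$.

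For $f \in A(B(\lambda_0, \delta))$, set $g(w) = f(\lambda_0 + \delta w) = f(\phi^{-1}(w))$. Because $\phi^{-1}$ is a polynomial, the map $f \mapsto f \circ \phi^{-1}$ is an isometry (in sup norm) that sends polynomials in $z$ to polynomials in $w$, hence $g \in A(\mathbb D)$. Applying Lemma \ref{lemmaDSet} to $g$ and $\tilde K$ yields
\[
|g(w)| \le \|g\|_{L^\infty(Q \cap \tilde K)} \le \|g\|_{L^\infty(\tilde K)} \quad \text{for all } w \in R.
\]
Since $B(0, 1/2) \subset R$, for $\lambda \in B(\lambda_0, \delta/2)$ we have $\phi(\lambda) \in B(0, 1/2) \subset R$, and hence
\[
|f(\lambda)| = |g(\phi(\lambda))| \le \|g\|_{L^\infty(\tilde K)} = \|f\|_{L^\infty(K \cap \overline{B(\lambda_0, \delta)})},
\]
which, by continuity of $f$, is the required bound $\|f\|_{L^\infty(K \cap B(\lambda_0, \delta))}$.

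There is essentially no obstacle: the entire argument is a change of variables, with the only substantive ingredient being the affine scaling law $\gamma(\phi(E)) = |a|\,\gamma(E)$, which is standard. The absolute constant $\epsilon_1$ in the corollary is taken to be the same $\epsilon_1$ furnished by Lemma \ref{lemmaDSet}.
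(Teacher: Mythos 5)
Your proof is correct, and it is exactly what the paper intends: the corollary is stated without proof because it is precisely this affine rescaling of Lemma \ref{lemmaDSet}, using the standard scaling law $\gamma(aE+b)=|a|\gamma(E)$ and the fact that $B(0,1/2)\subset R$. One cosmetic note: the passage from $\|f\|_{L^\infty(K\cap\overline{B(\lambda_0,\delta)})}$ to $\|f\|_{L^\infty(K\cap B(\lambda_0,\delta))}$ is better attributed to the circle $\partial B(\lambda_0,\delta)$ having zero area measure (the $L^\infty$ norms in this paper are with respect to area) than to continuity of $f$, but this does not affect the argument.
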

\medskip

\begin{proof} ({\bf Main Theorem}): From Lemma VII.1.7 in \cite{conway}, we find a function $G \in R^t(K,\mu )^\perp$ such that $G(z) \ne 0$ for $\mu$-almost every $z.$ There exists $Z_1 \subset \mathbb T$ with $m(Z_1) = 0$ such that $G(e^{i\theta})h(e^{i\theta}) \ne 0$ for $e^{i\theta}\in \partial _{so} \Omega \cap \mathcal N(h)\setminus Z_1$, where $\mathcal N(h) = \{e^{i\theta}:~ h(e^{i\theta}) > 0\}$.

By Theorem 3.6 (Plemelj’s Formula for an arbitrary measure) in \cite{acy18}, for $e^{i\theta}\in \partial _{so} \Omega \cap \mathcal N(h) \setminus Z_1 \setminus Z_2$ with  $m(Z_2) = 0$, $\Gamma ^{r_0}_\frac{1}{2}(e^{i\theta})  \subset \Omega ,$ and $b > 0$, there exist $0 < \delta_a^0 < 1-max(\frac{3}{4},  r_0),$ $E_{\delta} \subset B(e^{i\theta}, \delta)$, and $\epsilon^0 (\delta ) > 0,$ where $0 < \delta < \delta_a^0,$ such that $\lim_{\delta\rightarrow 0}\epsilon ^0(\delta ) = 0,$ $\gamma(E_\delta ) < \epsilon ^0(\delta ) \delta ,$ 
\[
\ \left  |\mathcal C(G\mu ) (\lambda) - \mathcal C(G\mu ) (e^{i\theta}) - \frac{1}{2}e^{-i\theta}G(e^{i\theta})h(e^{i\theta}) \right | \le \frac{b}{2}
\]
and
\[
\ \left  |\mathcal C(G\mu ) (\frac{1}{\bar\lambda}) - \mathcal C(G\mu ) (e^{i\theta}) + \frac{1}{2}e^{-i\theta}G(e^{i\theta})h(e^{i\theta}) \right | \le \frac{b}{2},
\]
hence,
\begin{eqnarray}\label{MTheoremEq1}
\ \left  |\mathcal C(G\mu ) (\lambda) - e^{-i\theta}G(e^{i\theta})h(e^{i\theta}) \right | \le b
\end{eqnarray}
since $\mathcal C(G\mu ) (\frac{1}{\bar\lambda}) = 0$ for all $\lambda\in B (e^{i\theta}, \delta ) \cap \Gamma _\frac{1}{2}(e^{i\theta})\setminus E_\delta.$

By Proposition \ref{MProposition2} for $p = t$, $q = t'$, $f\in Rat(K) \subset C(\overline{\mathbb D})$, and $g = G$, for $e^{i\theta}\in \partial _{so} \Omega \cap \mathcal N(h) \setminus Z_1 \setminus Z_3$ with  $m(Z_3) = 0$, $\Gamma ^{r_1}_\frac{1}{2}(e^{i\theta})  \subset \Omega ,$  $0 < \beta < \frac{1}{16}$, and  $b > 0$,  there exist $0 < \delta_a^1 < 1-max(\frac{3}{4},  r_1),$ $E_{\delta}^f \subset B(e^{i\theta}, \delta),$ and $\epsilon^1 (\delta ) > 0,$ where $0 < \delta < \delta_a^1,$ such that $\lim_{\delta\rightarrow 0}\epsilon ^1(\delta ) = 0,$ $\gamma(E_\delta ^f) < \epsilon ^1(\delta ) \delta ,$  
\begin{eqnarray}\label{MTheoremEq2}
\ \begin{aligned}
\ &\left  |\mathcal C \left (\dfrac{(1 - \bar \lambda _0 z)^{\frac{2}{t}}}{ (1-|\lambda _0|^2)^{\frac{1}{t}}}fG\mu \right )(\lambda) \right | \\
\ \le &\left(b + \dfrac{1+4\beta}{1-4\beta} \left ( \int _{\mathbb T} \dfrac{1 - |\lambda _0|^2}{|1 - \bar \lambda _0 z |^2} |G|^{t'}d\mu \right )^{\frac{1}{t'}} \right ) \|f\|_{L^t(\mu )}
\ \end{aligned}
\end{eqnarray}
for $\lambda _0 \in \partial B (e^{i\theta}, \frac{\delta}{2} ) \cap \Gamma _{\frac{1}{4}}(e^{i\theta})$ and all $\lambda\in B (\lambda _0, \beta \delta ) \setminus E_\delta^f.$ 

Set $Z = Z_1\cup Z_2\cup Z_3$. For $e^{i\theta}\in \partial _{so} \Omega \cap \mathcal N(h) \setminus Z$, set $\delta_a = min(\delta_a^0,\delta_a^1)$ and $\epsilon (\delta ) = min(\epsilon^0 (\delta ),\epsilon^1 (\delta ))$. Then for $e^{i\theta}\in \partial _{so} \Omega \cap \mathcal N(h) \setminus Z$ and $0 < \delta < \delta_a,$ \eqref{MTheoremEq1} and \eqref{MTheoremEq2} hold. 
From semi-additivity of Theorem \ref{TTolsa} (2), we get
 \[
 \ \gamma (E_\delta \cup E_\delta^f) \le A_T(\gamma (E_\delta ) + \gamma ( E_\delta^f)) \le 2A_T \epsilon (\delta ) \delta .
 \] 
Let $\delta $ be small enough so that $\epsilon (\delta ) < \frac{\beta}{2A_T}\epsilon _1,$ where $\epsilon _1$ is as in Corollary \ref{CorollaryDSet}. 
For $\lambda _0 \in \partial B (e^{i\theta}, \frac{\delta }{2}) \cap \Gamma _{\frac{1}{4}}(e^{i\theta})$ and all $\lambda\in B (\lambda _0, \beta \delta ) \setminus (E_{\delta } \cup E_{\delta }^f),$ it is clear that
\[
\ f(\lambda ) \mathcal C(G\mu ) (\lambda ) = \int \dfrac{f(z)}{z - \lambda }G(z) d\mu(z) = \mathcal C(fG\mu ) (\lambda )
\]
since $\frac{f(z) - f(\lambda}{z - \lambda} \in R^t(K, \mu)$. Together with \eqref{MTheoremEq1} and \eqref{MTheoremEq2}, we have the following calculation:
 \[
 \ | 1 - \bar \lambda _0 \lambda | \ge 1- |\bar \lambda _0|^2 - |\lambda - \lambda _0||\bar \lambda _0| \ge 1- |\bar \lambda _0|^2 - \beta \delta |\lambda _0|
 \]
and 
 \[
 \ \begin{aligned}
 \ (1-|\lambda _0|^2)^{\frac{1}{t}} |f(\lambda ) | \le & \dfrac{| (1 - \bar \lambda _0 \lambda )^{\frac{2}{t}}(1-|\lambda _0|^2)^{-\frac{1}{t}}f(\lambda ) |}{(1  - \beta \frac{\delta  |\lambda _0|}{1-|\lambda _0|^2})^{\frac{2}{t}}} \\
 \ = & \dfrac{1}{(1  - \beta \frac{\delta  |\lambda _0|}{1-|\lambda _0|^2})^{\frac{2}{t}}} \left |\dfrac{  \mathcal C \left (\dfrac{(1 - \bar \lambda _0 z)^{\frac{2}{t}}}{ (1-|\lambda _0|^2)^{\frac{1}{t}}}fG\mu \right )(\lambda) }{\mathcal C(G\mu ) (\lambda)} \right | \\
\ \le & \dfrac{ b + \frac{1+4\beta}{1-4\beta} \left ( \int _{\mathbb T} \frac{1 - |\lambda _0|^2}{|1 - \bar \lambda _0 z |^2} |G|^{t'}d\mu \right )^{\frac{1}{t'}} }{(1-4\beta)^{\frac{2}{t}}(|G(e^{i\theta})|h(e^{i\theta}) - b)} \|f\|_{L^t(\mu )}.
\ \end{aligned}
\] 
Since $\gamma (E_{\delta }\cup E_{\delta }^f) < \epsilon _1 \beta \delta ,$ from Corollary \ref{CorollaryDSet}, we conclude
 \[
 \ M_{\lambda _0} \le \sup _{\underset{\|f\|_{L^t(\mu )} = 1}{f\in Rat(K)}}|f(\lambda _0)| \le \sup _{\underset{\|f\|_{L^t(\mu )} = 1}{f\in Rat(K)}}\|f\|_{L^\infty (B (\lambda _0, \beta \delta ) \setminus (E_{\delta } \cup E_{\delta }^f)) }  
 \]
for $\lambda _0 \in \partial B (e^{i\theta}, \frac{\delta}{2} ) \cap \Gamma _{\frac{1}{4}}(e^{i\theta}).$  Hence,
 \[
 \ \underset{\Gamma _{\frac{1}{4}}(e^{i\theta})\ni \lambda _0 \rightarrow e^{i\theta}}{\overline\lim} (1-|\lambda _0|^2)^{\frac{1}{t}} M_{\lambda _0} \le \dfrac{ b + \frac{1+4\beta}{1-4\beta} |G(e^{i\theta})|(h(e^{i\theta}))^{\frac{1}{t'}} }{(1-4\beta)^{\frac{2}{t}}(|G(e^{i\theta})|h(e^{i\theta}) - b)}
 \]
since $\frac{1 - |\lambda _0|^2}{|1 - \bar \lambda _0 z |^2}$ is the Poisson kernel. Taking $b\rightarrow 0$ and $\beta\rightarrow 0, $ we get
 \[
 \ \lim_{\Gamma _{\frac{1}{4}}(e^{i\theta}) \ni \lambda \rightarrow e^{i\theta}} (1 - |\lambda |^2)^{\frac{1}{t}} M_\lambda \le \dfrac{1}{h(e^{i\theta})^{\frac{1}{t}}}.
 \]
The reverse inequality is from Lemma 1 in \cite{kt77} (applying the lemma to testing function $(1 - \bar \lambda _0 z )^{-\frac{2}{t}}$). This completes the proof.
\end{proof}
\bigskip

\centerline{\bf Acknowledgment}

The author would like to thank the referee for providing helpful comments.
\bigskip

\end{document}